\documentclass[a4paper,12pt]{amsart}

\textwidth450pt
\hoffset-40pt
\voffset-20pt
\headsep+30pt
\textheight610pt

\usepackage{amssymb, amscd}
\usepackage{latexsym,epsfig}
\usepackage[all]{xy}
\usepackage{pst-func}
\usepackage{pst-math}
\usepackage{pst-all}
\usepackage{hyperref}
\usepackage{color}
\numberwithin{equation}{section}
\def\today{\ifcase\month\or Jan\or Febr\or  Mar\or  Apr\or May\or Jun\or  Jul\or Aug\or  Sep\or  Oct\or Nov\or  Dec\or\fi \space\number\day, \number\year}


\newcommand{\CC}{\mathbb C}
\newcommand{\EE}{\mathbb E}

\newcommand{\PP}{\mathbb P}

\newcommand{\ZZ}{\mathbb Z}

\newcommand{\Sym}{{\mathrm{Sym}}}

\def\vandaag{\number\day\space\ifcase\month\or
 januari\or februari\or  maart\or  april\or mei\or juni\or  juli\or
 augustus\or  september\or  oktober\or november\or  december\or\fi,
\number\year}
\def\today{\ifcase\month\or
 Jan\or Febr\or  Mar\or  Apr\or May\or Jun\or  Jul\or
 Aug\or  Sep\or  Oct\or Nov\or  Dec\or\fi
 \space\number\day, \number\year}




\newtheorem{theorem}{Theorem}
\newtheorem{lemma}[theorem]{Lemma}
\newtheorem{proposition}[theorem]{Proposition}

\newtheorem{definition-lemma}[theorem]{Definition-Lemma}

\theoremstyle{definition}

\theoremstyle{remark}
\newtheorem{remark}[theorem]{Remark}

\theoremstyle{conclusion}


\begin{document}
\title[]{Covariants of binary sextics and modular forms \\
of degree 2 with character}

\author{Fabien Cl\'ery}
\address{Department of Mathematical Sciences,
Loughborough University,
UK}
\email{cleryfabien@gmail.com}

\author{Carel Faber}
\address{Mathematisch Instituut, Universiteit Utrecht,
Postbus Box 80010,
3508 TA Utrecht,
The Netherlands}
\email{C.F.Faber@uu.nl}

\author{Gerard van der Geer}
\address{Korteweg-de Vries Instituut, Universiteit van
Amsterdam, Postbus 94248,
1090 GE  Amsterdam, The Netherlands}
\email{G.B.M.vanderGeer@uva.nl}

\begin{abstract}\
We use covariants of binary sextics to describe the structure of 
modules of scalar-valued or vector-valued Siegel modular forms  of degree $2$ with character, over the ring of scalar-valued 
Siegel modular forms of even weight. For a modular form defined by a 
covariant we express the order of vanishing along the locus of 
products of elliptic curves in terms of the covariant.
\end{abstract}

\maketitle

\let\thefootnote\relax\footnotetext{The research of the first author
was supported by the EPSRC grant EP/N031369/1.}
\begin{section}{Introduction}
In \cite{CFvdG} we describe a map from covariants of binary sextics to Siegel
modular forms of degree $2$. If $V$ denotes the standard $2$-dimensional 
representation of ${\rm GL}(2,{\CC})$ with basis $x_1,x_2$ we consider the space ${\rm Sym}^6(V)$
of binary sextics. A general element $f\in {\rm Sym}^6(V)$ will be written as
$$
f= \sum_{i=0}^6 a_i \binom{6}{i} x_1^{6-i} x_2 ^i \, .
$$
The group ${\rm GL}(2,{\CC})$ acts on ${\rm Sym}^6(V)$.
We denote by $\mathcal{C}$ the ring of covariants of binary sextics. 
A bihomogeneous covariant has a
bi-degree $(a,b)$, meaning that it can be seen as a homogeneous 
expression of degree $a$ in the
coefficients $a_i$ of $f$ and as a form of degree $b$ in $x_1,x_2$;
such a covariant will be denoted by $C_{a,b}$.
The map from covariants to Siegel modular forms defined in \cite{CFvdG}
is a map
$$
\nu: \mathcal{C} \to M_{\chi_{10}}\, ,
$$
where $M$ is the ring of vector-valued modular forms of degree $2$ 
on $\Gamma_2={\rm Sp}(4,{\ZZ})$ and the subscript $\chi_{10}$ means
that Igusa's cusp form $\chi_{10}$ of weight $10$ is
inverted.
It sends the binary sextic 
$f$ to the meromorphic vector-valued modular form $\chi_{6,8}/\chi_{10}$
of weight $(6,-2)$, where $\chi_{6,8}$ is the unique holomorphic modular form
of weight $(6,8)$ (it is a cusp form). Using modular forms with character,
we can also write this as  $\chi_{6,3}/\chi_5$.
This map provides us with a very effective method
for constructing Siegel modular forms on $\Gamma_2$  
with or without character. 
We used it in \cite{CFvdG,C-vdG-Ch} to construct modular forms.

Since the image of a covariant under $\nu$ may be meromorphic on ${\mathcal A}_2$, with possible poles
along the locus ${\mathcal A}_{1,1}$ of abelian surfaces that are products of elliptic curves,
it is important to have a method to determine the order of vanishing of modular forms
obtained from covariants
along this locus. In this paper we give such a method. In our earlier papers 
\cite{CFvdG} and \cite{C-vdG-Ch} we relied on restriction of the corresponding
modular forms to the diagonal in the Siegel upper half space instead.

To exhibit the effectiveness of our method, we 
use it here to construct generators for certain modules of vector-valued 
Siegel modular forms of degree $2$. 

We denote by $M_{j,k}(\Gamma_2)$ (resp. $S_{j,k}(\Gamma_2)$)
the vector space of Siegel modular forms 
(resp.\ of cusp forms) of weight $(j,k)$ on $\Gamma_2$, 
that is,
the weight corresponds to the irreducible representation ${\rm Sym}^j({\rm St})\otimes
{\det{}}^k({\rm St})$ with ${\rm St}$ the standard representation of ${\rm GL}(2)$.
The group $\Gamma_2$ admits a character $\epsilon$ 
of order~$2$ and $\chi_5$, the square root of $\chi_{10}$, 
is a modular form of weight $5$ with this character. We refer to the last section
for a way to calculate the character. We denote the space
of modular forms (resp.\ of cusp forms) of weight $(j,k)$ with character
$\epsilon$ by $M_{j,k}(\Gamma_2,\epsilon)$ (resp.\ by $S_{j,k}(\Gamma_2,\epsilon)$).
 
Let $R=\oplus_{k \, {\rm even}} M_k(\Gamma_2)$ be the ring of scalar-valued Siegel modular
forms of degree $2$ of even weight. Igusa showed that it is a polynomial 
ring generated by $E_4,E_6,\chi_{10}$ and $\chi_{12}$.

We are interested in the structure of the $R$-modules
$$
\mathcal{M}_j^{\rm ev}(\Gamma_2,\epsilon) =\oplus_{k \, \text{even}}  M_{j,k}(\Gamma_2,\epsilon)
\quad \text{and}\quad
\mathcal{M}_j^{\rm odd}(\Gamma_2,\epsilon) =\oplus_{k\, \text{odd}} M_{j,k}(\Gamma_2,\epsilon)\, .
$$
The structure of the analogous modules for modular forms without character
$$
\mathcal{M}_j^{\rm ev}(\Gamma_2)=
\oplus_{k\, \text{even}}  M_{j,k}(\Gamma_2)  \quad {\rm and}\quad
\mathcal{M}_j^{\rm odd}(\Gamma_2)=
\oplus_{k\, \text{odd}}  M_{j,k}(\Gamma_2)
$$
is known for some values of $j$ by 
work of Satoh, Ibukiyama, van Dorp, Kiyuna, and Takemori, see \cite{Satoh,Ibu,vanDorp,Kiyuna,Takemori}.
The next table summarizes
the results.

\begin{footnotesize}
\smallskip
\vbox{
\bigskip\centerline{\def\quad{\hskip 0.6em\relax}
\def\quod{\hskip 0.5em\relax }
\vbox{\offinterlineskip
\hrule
\halign{&\vrule#&\strut\quod\hfil#\quad\cr
height2pt&\omit &&\omit  &&\omit &&\omit &&\omit &&\omit   &\cr
& $j$  && 2 && 4 && 6 && 8 && 10 &\cr
\noalign{\hrule}
& even && Satoh \cite{Satoh} && Ibukiyama \cite{Ibu} && Ibukiyama \cite{Ibu} && Kiyuna \cite{Kiyuna}  && Takemori
\cite{Takemori} &\cr
\noalign{\hrule}
& odd && Ibukiyama \cite{Ibu}  && Ibukiyama \cite{Ibu}  && van Dorp \cite{vanDorp} && Kiyuna \cite{Kiyuna} && Takemori 
\cite{Takemori} &\cr
} \hrule}
}}
\end{footnotesize} 

\smallskip

\noindent
The difficult part  is the construction of the generators
and the authors just mentioned used an array of methods to construct generators.
For example, Satoh used generalized Rankin-Cohen brackets, 
Ibukiyama used theta series for even unimodular lattices,
van Dorp used differential operators, and so on. Here we produce the generators 
we need by a uniform method via the covariants of binary sextics. 
We treat the cases $j=0, 2, 4, 6, 8, 10$ even and odd. In all these cases the module 
turns out to be a free $R$-module.

\noindent
{\bf Acknowledgement.} The authors thank the Max-Planck-Institut f\"ur
Mathematik in Bonn for the hospitality enjoyed while this work was done.

\end{section}
\begin{section}{The ring of covariants of binary sextics}
We recall some facts about the ring $\mathcal{C}$ of covariants of binary sextics.
For a description of 
$\mathcal{C}$ 
we refer to \cite{CFvdG,C-vdG-Ch}
and the classical literature mentioned there. 
The book of Grace and Young \cite[p.~156]{G-Y} gives 26 generators for this ring.
All these generators can be obtained as (repeated) so-called transvectants of 
the binary sextic $f$. The $k$th transvectant of two forms $g\in {\rm Sym}^m(V)$,
$h\in {\rm Sym}^n(V)$ is defined as 
$$
(g,h)_k=\frac{(m-k)!(n-k)!}{m!\,  n!}\sum_{j=0}^k (-1)^j
\binom{k}{j}
\frac{\partial^k g}{\partial x_1^{k-j}\partial x_2^j}
\frac{\partial^k h}{\partial x_1^{j}\partial x_2^{k-j}}
$$
and the index $k$ is usually omitted if $k=1$. 
If $g$ is a covariant of bi-degree $(a,m)$ and $h$ a covariant of bi-degree $(b,n)$,
then $(g,h)_k$ is a covariant of bi-degree $(a+b,m+n-2k)$
(cf.~\cite{Chi}).
The following table
summarizes the construction of the $26$ generators.

\begin{footnotesize}
\smallskip
\vbox{
\bigskip\centerline{\def\quad{\hskip 0.6em\relax}
\def\quod{\hskip 0.5em\relax }
\vbox{\offinterlineskip
\hrule
\halign{&\vrule#&\strut\quod\hfil#\quad\cr
height2pt&\omit &&\omit &&\omit &&\omit &&\omit &\cr
& 1 && $C_{1,6}=f$  && && && &\cr
\noalign{\hrule}
& 2  && $C_{2,0}=(f,f)_6$ && $C_{2,4}=(f,f)_4$  && $C_{2,8}=(f,f)_2$ && &\cr
\noalign{\hrule}
& 3  && $C_{3,2}=(f,C_{2,4})_4$ && $C_{3,6}=(f,C_{2,4})_2$ && $C_{3,8}=(f,C_{2,4})$ && $C_{3,12}=(f,C_{2,8})$ &\cr
\noalign{\hrule}
& 4  && $C_{4,0}=(C_{2,4},C_{2,4})_4$ && $C_{4,4}=(f,C_{3,2})_2$ &&  $C_{4,6}=(f,C_{3,2})$ && $C_{4,10}=(C_{2,8},C_{2,4})$ &\cr
\noalign{\hrule}
& 5  && $C_{5,2}=(C_{2,4},C_{3,2})_2$ && $C_{5,4}=(C_{2,4},C_{3,2})$ && $C_{5,8}=(C_{2,8},C_{3,2})$ && &\cr
\noalign{\hrule}
& 6  && $C_{6,0}=(C_{3,2},C_{3,2})_2$ && $C_{6,6}^{(1)}=(C_{3,6},C_{3,2})$ && $C_{6,6}^{(2)}=(C_{3,8},C_{3,2})_2$ && &\cr
\noalign{\hrule}
& 7  && $C_{7,2}=(f,C_{3,2}^2)_4$ && $C_{7,4}=(f,C_{3,2}^2)_3$ && && &\cr
\noalign{\hrule}
& 8  && $C_{8,2}=(C_{2,4},C_{3,2}^2)_3$ && && && &\cr
\noalign{\hrule}
& 9  && $C_{9,4}=(C_{3,8},C_{3,2}^2)_4$ && && && &\cr
\noalign{\hrule}
& 10  && $C_{10,0}=(f,C_{3,2}^3)_6$ && $C_{10,2}=(f,C_{3,2}^3)_5$ && && &\cr
\noalign{\hrule}
& 12  && $C_{12,2}=(C_{3,8},C_{3,2}^3)_6$ && && && &\cr
\noalign{\hrule}
& 15  && $C_{15,0}=(C_{3,8},C_{3,2}^4)_8$ && && && &\cr
} \hrule}
}}
\end{footnotesize}
 
\end{section}
\begin{section}{Covariants and modular forms}
The group $\Gamma_2$ acts on the Siegel upper half space $\mathfrak{H}_2$ and the orbifold
quotient $\Gamma_2 \backslash \mathfrak{H}_2$ can be identified with the moduli space
$\mathcal{A}_2$ of principally polarized abelian surfaces. If $\mathcal{M}_2$ denotes the
moduli space of complex smooth projective curves of genus $2$ we have the
 Torelli map $\mathcal{M}_2 \hookrightarrow \mathcal{A}_2$. 
This is an embedding and the complement of the image is the locus 
$\mathcal{A}_{1,1}$ of products of elliptic curves. 
This is the image of the `diagonal'
$$
\{ \tau=\left( \begin{matrix} \tau_{11} & \tau_{12} \\ \tau_{12} & \tau_{22}\\ \end{matrix}
\right) \in \mathfrak{H}_2 : \tau_{12}=0 \}
$$
and also the zero locus of the cusp form $\chi_{10}$ that vanishes with order $2$
there. 

The moduli space $\mathcal{M}_2$ has another description as a stack quotient 
of the action of ${\rm GL}(2,{\CC})$ on the space of binary sextics. 
We take the opportunity to correct an erroneous representation of this 
stack quotient in \cite{CFvdG}.  

Let $V$ be a $2$-dimensional vector space, say generated by $x_1,x_2$,
and consider ${\rm Sym}^6(V)$, the space of binary sextics. 
The group ${\rm GL}(V)$ acts from the right; an element $A=\left(\begin{matrix}
a & b \\ c & d \\ \end{matrix} \right)$ sends $f(x_1,x_2)$ to 
$f(ax_1+bx_2,cx_1+dx_2)$. We twist the action by $\det^{-2}(V)$ and consider
then 
$$
\mathcal{X}={\rm Sym}^6(V)\otimes {\det}^{-2}(V)\, .
$$ 
We let $\mathcal{X}^0\subset \mathcal{X}$ be the open set of binary sextics with non-vanishing
discriminant. An element $f$ of $\mathcal{X}^0$ defines a nonsingular
curve of genus $2$ via the equation $y^2=f(x)$. The action on the 
equation $y^2=f(x)$ is now induced by
$$
x \mapsto (ax+b)/(cx+d), \quad y \mapsto (ad-bc)\,  y/(cx+d)^3 \, .
$$
Then $\eta {\rm id}_V$ acts on the binary sextics as $\eta^2$, so that 
only $\pm {\rm id}_V$ acts trivially. 
The action of $-{\rm id}_V$ on  $(x,y)$ is $(x,y) \mapsto (x,-y)$ 
and induces the
hyperelliptic involution. 
So the stack quotient $[\mathcal{X}^0/{\rm GL}(V)]$
equals the stack $\mathcal{M}_2$. Let $\alpha:
\mathcal{X}^0 \to \mathcal{M}_2$ be the quotient map.

The equation $y^2=f(x)$ defines two differentials $xdx/y$ and $dx/y$ that
form a basis of the space of regular differentials on the curve and the 
action of ${\rm GL}(V)$ 
is by the standard representation. Thus the pullback
under $\alpha$
of the Hodge bundle ${\EE}$ 
from $\mathcal{M}_2$ to $\mathcal{X}^0$ is the equivariant
bundle defined by the standard representation $V\times \mathcal{X}^0$. 
The equivariant bundle ${\rm Sym}^6(V)\otimes
\det^{-2}(V)$ has the diagonal section $f \mapsto (f,f)$. 
This diagonal section, the universal binary sextic, thus defines a meromorphic
section $\chi_{6,-2}$ of ${\rm Sym}^6({\EE}) \otimes \det({\EE})^{-2}$. 
Since the construction extends to the locus of binary sextics with zeroes
of multiplicity at most $2$, the section extends regularly over
$\delta_0\setminus\delta_1$. 
(Here, $\delta_0$ corresponds to $\overline{\mathcal{A}}_2\setminus
\mathcal{A}_2$, the divisor at infinity, and $\delta_1$ to the closure
of $\mathcal{A}_{1,1}$.)
With this construction, the pole order
along $\delta_1$ is not yet known, but after multiplication with a power
of $\chi_{10}$ the section becomes regular.

In fact, it is not hard to see that $\chi_{6,-2}$ has a simple pole
along $\delta_1$. Using Taylor or series expansions in the normal
direction to $\mathfrak{H}_1\times\mathfrak{H}_1$ with coordinate
$t=2\pi i\tau_{12}$ as in \cite[\S5]{CFvdG} and coordinates $c_i$ on $\Sym^j$
corresponding to the monomials $\binom{j}{i}x_1^{j-i}x_2^i$,
we see that the coefficient of $t^m$ in $c_i$ 
in the expansion of a section of ${\rm Sym}^j({\EE})\otimes \det({\EE})^{\otimes k}$ 
is of the form $g\otimes h$,
with $g$ quasimodular of weight $j-i+k+m$ and $h$ quasimodular of weight $i+k+m$.
To get nonzero coefficients, the two weights and hence their sum $j+2k+2m$ must
be nonnegative. For $\chi_{6,-2}$, we get $2+2m\ge0$, hence $m\ge-1$, proving
the claim. Multiplying $\chi_{6,-2}$ with $\chi_{10}$, we obtain the
holomorphic modular form $\chi_{6,8}$, unique up to a scalar; alternatively,
$\chi_{6,-2}$ can be written as $\chi_{6,3}/\chi_5$.

We can interpret modular forms as
sections of vector bundles made out of ${\EE}$ by Schur functors, 
like ${\rm Sym}^j({\EE})\otimes
\det({\EE})^{\otimes k}$.  
Since the pullback of the Hodge bundle is the equivariant bundle defined by $V$,
the pullback of such a section can be interpreted as a
covariant.  
Recall that the ring of covariants is the
ring of invariants for the action of ${\rm SL}(V)$ on 
$V \oplus {\rm Sym}^6(V)$, see for example
\cite[p.~55]{Springer}.
Conversely, a (bihomogeneous) covariant corresponds to a meromorphic
modular form, with poles at most along $\delta_1$, hence to an element
of $M_{\chi_{10}}$.

We thus get maps
$$
M \to \mathcal{C} {\buildrel \nu \over \longrightarrow}  M_{\chi_{10}}
$$
with $\mathcal{C}$ the ring of covariants of binary sextics and
$M=\oplus_{j,k} M_{j,k}(\Gamma_2)$ and $M_{\chi_{10}}$ its localization 
at the multiplicative system generated by $\chi_{10}$. 
For another perspective on the map $\nu$, see \cite[\S6]{CFvdG}.

\end{section}
\begin{section}{The Order of Vanishing}
In this section we will describe a way to calculate the order of vanishing
along the locus $\mathcal{A}_{1,1}$ of a modular form defined by a covariant. 
A covariant $C$ has a bi-degree $(a,b)$:
if we consider $C$ as a form in the variables $a_0,\ldots,a_6$ and $x_1,x_2$
then it is of degree  $a$ in the $a_i$ and degree $b$ in $x_1,x_2$. The map
$\nu: \mathcal{C} \to M_{\chi_{10}}$ associates to $C$ a meromorphic modular form
of weight $(b, a-b/2)$ on $\Gamma_2$. It has the property that $\chi_5^{a} \, \nu(C)$
is a holomorphic modular form on $\Gamma_2$, but with character if $a$ is odd. 

Recall that $\mathcal{M}_2$ is represented as the stack
quotient $[\mathcal{X}^0/{\rm GL}(V)]$. The relation with 
the compactification of $\mathcal{M}_2$ is as follows.

In the (projectivized) space of binary sextics ${\PP}({\mathcal X})$ 
the discriminant defines
a hypersurface~$\Delta$. This hypersurface has a codimension $1$ singular locus,
one component of which
is the locus $\Delta'$ of binary sextics with three coinciding roots. 
So we are in codimension $2$ in ${\PP}(\mathcal{X})$ 
and we take a general plane $\Pi$ in 
${\PP}({\mathcal X})$ 
intersecting $\Delta$ transversally at a general point of $\Delta'$.   

In the plane $\Pi$ the intersection with $\Delta$ 
gives rise to a curve with a cusp singularity
corresponding to the intersection with $\Delta'$; 
we assume this latter point is the origin of $\Pi$.
In local coordinates $u,v$ in the  plane the discriminant is given by $u^2=v^3$. 
One then blows up the plane at the origin three times. This is illustrated in the
following picture (cf.\ the picture in \cite[p.\ 80]{D-S}).

\begin{pspicture}(-2,-2)(10,3)
\psecurve[linecolor=red](-1,0)(0,0)(0.3,0.16)(0.5,0.35)(1,1)(2,2.82)
\psecurve[linecolor=red](-1,0)(0,0)(0.3,-0.16)(0.5,-0.35)(1,-1)(2,-2.82)
\psline{<-}(2,0)(2.4,0)
\psline{}(3,-1)(3,1)
\pscurve[linecolor=red](4,-1)(3,0)(4,1)
\psline{<-}(4.5,0)(4.9,0)
\psline(6,-1)(6,1)
\psline[linecolor=red](5.6,-1)(6.4,1)
\psline(6.4,-1)(5.6,1)
\psline{<-}(7,0)(7.4,0)
\psline[linecolor=blue](8,0)(10,0)
\psline[linecolor=red](9,-1)(9,1)
\psline(8.5,-1)(8.5,1)
\psline(9.5,-1)(9.5,1)
\rput(8.3,-1){$E_1$}
\rput(9.8,-1){$E_2$}
\rput(10.2,0){$E_3$}
\end{pspicture}

Then one blows down the exceptional fibres $E_1$ and $E_2$. The image of $E_3$ corresponds
in $\overline{\mathcal M}_2$ (resp.~$\overline{\mathcal A}_2$) to the locus 
$\delta_1$ (resp.~$\overline{\mathcal{A}}_{1,1}$) of unions (resp.~products) of elliptic curves. 

If $C$ is a covariant then it defines a section of an equivariant vector bundle 
on $\mathcal{X}$ and we can pull this back to the blow-up. It then makes sense to 
speak of the order of this section along the divisor $E_3$.

If we consider in the last setting a vertical line that intersects 
the image of $E_3$ transversally
at a general point, 
then this corresponds in the original plane with $u,v$ coordinates
to a curve $u^2=c\, v^3$. We can calculate the order of vanishing along $E_3$ by
calculating the order of the covariant on a general family corresponding to $u^2=c\, v^3$. 

The plane $\Pi$
corresponds to a family of binary sextics of the form
$$
g= (x^3+vx+u) h
$$
with $h$ a general cubic polynomial in $x$. The substitution $u=c^2t^3$, $v=ct^2$
(with $c$ general) gives a family corresponding to $u^2=c\, v^3$ 
and the order in $t$ of the covariant after substitution gives the order along $E_3$.

\begin{theorem} \label{thm1}
Let $C$ be a covariant of binary sextics of degree $a$ in the $a_i$ 
and let $\chi_C= \nu(C)$ be the
meromorphic modular form obtained by substituting $\chi_{6,-2}$. Then the order of $\chi_C$
along $\mathcal{A}_{1,1}$ is given by 
$$
{\rm ord}_{\mathcal{A}_{1,1}}(\chi_C)= 2 \, {\rm ord}_{E_3}(C) -a \, .
$$
\end{theorem}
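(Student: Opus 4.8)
The plan is to reduce the global identity to a local computation along the explicit degenerating family $g=(x^3+vx+u)h$ near a generic point of $\mathcal{A}_{1,1}$, comparing the two natural trivializations of the Hodge bundle $\EE$. On the curve $y^2=g(x)$ the differentials $dx/y,\ x\,dx/y$ give the \emph{algebraic} frame of $\EE$, in which the tautological sextic is literally $g$ and $C$ evaluates to $C(g)$; by definition ${\rm ord}_{E_3}(C)={\rm ord}_t\,C(g(t))$ after the substitution $u=c^2t^3,\ v=ct^2$. On the other hand $\chi_C=\nu(C)$ is a section of ${\rm Sym}^b(\EE)\otimes\det(\EE)^{a-b/2}$ read off in the \emph{canonical} (period) frame, and its order in the normal coordinate $\tau_{12}$ is exactly ${\rm ord}_{\mathcal{A}_{1,1}}(\chi_C)$. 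Since $\nu(C)$ is one section written in two frames, the covariance of $C$ gives $\chi_C=T\cdot C(g)$ with $T={\rm Sym}^b(M)\otimes\det(M)^{a-b/2}$, where $M$ is the transition matrix from the algebraic to the canonical frame of $\EE$ along the family. Everything is reduced to understanding $M$ and the relation between $t$ and $\tau_{12}$.

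I would extract the term $-a$ first, by pure homogeneity together with the square root $\chi_5$ of $\chi_{10}$. Writing $\chi_{6,-2}=\chi_{6,3}/\chi_5$ and using that $C$ is homogeneous of degree $a$ in the sextic gives $\chi_C=C(\chi_{6,-2})=\chi_5^{-a}\,C(\chi_{6,3})$; since $\chi_5$ vanishes to order $1$ along $\mathcal{A}_{1,1}$ (as $\chi_{10}=\chi_5^2$ vanishes to order $2$), this yields ${\rm ord}_{\mathcal{A}_{1,1}}(\chi_C)=-a+{\rm ord}_{\mathcal{A}_{1,1}}\!\big(C(\chi_{6,3})\big)$. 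The form $C(\chi_{6,3})$ is holomorphic, and its restriction to $\mathcal{A}_{1,1}$ is $C$ applied to $\chi_{6,3}|_{\mathcal{A}_{1,1}}$, which one computes to be a nonzero multiple of $e_1^3e_2^3$ in the canonical frame (triple roots at the two elliptic summands). This isolates $-a$ cleanly and reduces the theorem to the single identity ${\rm ord}_{\mathcal{A}_{1,1}}\!\big(C(\chi_{6,3})\big)=2\,{\rm ord}_{E_3}(C)$.

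The factor $2$ is the geometric heart. The key is that along the sextic family the normal coordinate $\tau_{12}$ is a \emph{square root} of the parameter $t$: the map $t\mapsto(c^2t^3,ct^2)$ is the normalization of the cuspidal curve $u^2=c\,v^3$, and a period computation (integrating the normalized $x\,dx/y$ over a vanishing $B$-cycle on the collapsing summand) gives $\det(M)\sim\tau_{12}^{-1}$ and $\tau_{12}\sim t^{1/2}$, i.e.\ $t\sim\tau_{12}^2$. This square root is precisely the ramification responsible for $\chi_{10}$ vanishing to order $2$, and it is pinned down by the calibration $\nu({\rm disc})=\chi_{10}$: from $\chi_{10}=\det(M)^{10}\,{\rm disc}(g)$ with ${\rm ord}_t\,{\rm disc}(g)=6$, ${\rm ord}_{\tau_{12}}\chi_{10}=2$ and ${\rm ord}_{\tau_{12}}\det(M)=-1$ one gets $6k-10=2$, forcing $t\sim\tau_{12}^2$. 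Granting this, the one-parameter degeneration traced by $\chi_{6,3}(\tau_{12})$ is the same as the one traced by $g(t)$, so the vanishing order of $C$ along it in $\tau_{12}$ is twice its vanishing order in $t$, giving ${\rm ord}_{\mathcal{A}_{1,1}}\!\big(C(\chi_{6,3})\big)=2\,{\rm ord}_{E_3}(C)$ and hence the theorem.

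The step I expect to be the main obstacle is the frame comparison underlying this last assertion: proving that the canonical normal expansion of the universal sextic really matches the explicit family $g(t)$ under $t\sim\tau_{12}^2$ for \emph{every} covariant, not merely for invariants. For invariants ($b=0$) one has $T=\det(M)^a\sim\tau_{12}^{-a}$ and the identity is immediate. For $b>0$ one must control ${\rm Sym}^b(M)$, whose entries have orders $-1,0,1,0$ (for the $(1,1),(1,2),(2,1),(2,2)$ positions) in $\tau_{12}$, and check that the minimum order over the components of $T\cdot C(g)$ is exactly $2\,{\rm ord}_{E_3}(C)-a$. I would either carry out this min-order bookkeeping directly from the explicit $M$, or, more economically, invoke additivity of both sides under multiplication of covariants together with the computed values on the ring generators listed earlier, so that the factor-$2$ identity—now the sole remaining point—follows from the period computation and the $\chi_{10}$-calibration above.
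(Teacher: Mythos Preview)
Your two-step decomposition is correct: the $-a$ comes from homogeneity and the simple pole of $\chi_{6,-2}$ (which you extract via $\chi_5$), and the factor $2$ from the ramification $t\sim\tau_{12}^2$ between the sextic parameter and the normal coordinate to $\mathcal{A}_{1,1}$. The paper follows the same outline but with two changes that together bypass exactly the gap you flag for $b>0$. First, instead of dividing by $\chi_5^a$, the paper (after squaring $C$ if $a$ is odd, so $a=2c$) divides by $A^c$ where $A=C_{2,0}$ is the degree-$2$ \emph{invariant}; since $A$ does not vanish on $E_3$ and $\chi_A$ has a pole of order exactly $2$ along $\delta_1$, one is reduced to ${\rm ord}_{\delta_1}(\chi_{C/A^c})=2\,{\rm ord}_{E_3}(C/A^c)$ with $C/A^c$ now of degree $0$ in the $a_i$. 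The payoff is that the components of $C/A^c$ are then honest meromorphic functions on $\PP(\mathcal{X})$ that descend directly to the components of $\chi_{C/A^c}$: there is no transition matrix $M$ to track and no ${\rm Sym}^b(M)$ min-order bookkeeping. Second, for the factor $2$ the paper replaces your period integral by a one-line moduli observation: $E_3$ in the blown-up picture corresponds to the \emph{coarse} space $M_{1,1}$ rather than the stack $\mathcal{M}_{1,1}$, so vanishing orders double when compared along $\delta_1$. This is precisely the conceptual content of your $t\sim\tau_{12}^2$.

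Your calibration via $\nu({\rm disc})=\chi_{10}$ is a useful consistency check but not an independent derivation of $t\sim\tau_{12}^2$, since it already presupposes ${\rm ord}_{\tau_{12}}\det(M)=-1$ from the period computation you only sketch. Your fallback of verifying the identity on ring generators and invoking multiplicativity would work in principle, but it amounts to checking all $26$ basic covariants case by case; the $A^c$ trick renders this unnecessary.
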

\begin{proof}
Since $\chi_C$ is obtained by substituting the components of $\chi_{6,-2}$ in $C$
(cf.~\cite[\S6]{CFvdG}) and since $\chi_{6,-2}$ has a simple pole along $\delta_1$,
the order of $\chi_C$ along $\delta_1$ (a.k.a.~$\overline{\mathcal{A}}_{1,1}$) is at
least $-a$. It can only be larger when $C$ vanishes along $E_3$, the exceptional divisor
of the third blow-up of $\mathcal{X}$. To work this out precisely, note first that
the degree (resp.~the order) of a product equals the sum of the degrees (resp.~the orders)
of the factors. Hence, after replacing $C$ by its square if necessary, we may assume
that $a$ is even, equal to $2c$.
Consider the invariant $A$ of degree~$2$:
$$A=a_0a_6-6a_1a_5+15a_2a_4-10a_3^2$$
(proportional to $C_{2,0}$). Clearly, it doesn't vanish on $E_3$, and the associated
scalar-valued meromorphic modular form $\chi_A$ of weight~$2$ has a pole of order~$2$
along~$\delta_1$. We can write $C$ as $(C/A^c)\cdot A^c$ and $\chi_C$ as
$\chi_{C/A^c}\cdot \chi_A^c$, where $C/A^c$ is a meromorphic covariant and
$\chi_{C/A^c}$ a meromorphic vector-valued modular form, regular along $\delta_1$
but with possible poles along the zero locus of $\chi_A$.
The components of $C/A^c$ are meromorphic functions on
${\PP}({\mathcal X})$ that descend to the components of $\chi_{C/A^c}$. The
(minimal) orders of vanishing along $E_3$ respectively $\delta_1$ are clearly
closely related, but since $E_3$ in the picture above corresponds to the
{\em coarse} moduli space $M_{1,1}$, not to the stack ${\mathcal M}_{1,1}$,
the order of $\chi_{C/A^c}$ along $\delta_1$ equals twice the order of $C/A^c$ along $E_3$.
\end{proof}
\end{section}
\begin{section}{Rings and Modules of Modular Forms}

Let $R=\oplus_{k \, {\rm even}} M_k(\Gamma_2)$ be the graded 
ring of scalar-valued Siegel modular forms of even weight on $\Gamma_2$.
One knows that
$
R=\CC[E_4,E_6,\chi_{10},\chi_{12}]
$
and so its Hilbert-Poincar\' e series equals
$1/(1-t^4)(1-t^6)(1-t^{10})(1-t^{12})$.

We denote by $\epsilon$ the unique nontrivial character of order 2 
of $\Gamma_2$  (see Section 12 for a description of this character). 
Let $\Gamma_2[2]$ be the principal congruence subgroup of level $2$ of $\Gamma_2$.
The group ${\rm Sp}(4,{\ZZ}/2{\ZZ})$ is isomorphic to $\mathfrak{S}_6$.
We fix an explicit isomorphism
by identifying the symplectic lattice over ${\ZZ}/2{\ZZ}$ with the subspace
$\{ (a_1,\ldots,a_6) \in ({\ZZ}/2{\ZZ})^6: \sum a_i=0\}$ modulo the
diagonally embedded ${\ZZ}/2{\ZZ}$ with form $\sum_i a_ib_i$ as in
\cite[Section 2]{BFvdG1}; it is given explicitly on generators
of $\mathfrak{S}_6$ in \cite[Section 3, (3.2)]{CvdGG}.
Thus $\mathfrak{S}_6$  
acts on the space of modular forms $M_{j,k}(\Gamma_2[2])$
and the space $M_{j,k}(\Gamma_2,\epsilon)$ can be identified with  the subspace
of $M_{j,k}(\Gamma_2[2])$ on which $\mathfrak{S}_6$ acts via the alternating representation.
Since $-1_4$ belongs to $\Gamma_2[2]$, we have $M_{j,k}(\Gamma_2,\epsilon)=(0)$ for $j$ odd.
In the sequel, the integer $j$ will always be even.
The following result is in \cite{IW}; 
for the reader's convenience we give an alternative proof.
\begin{lemma}
We have $M_{j,k}(\Gamma_2,\epsilon)=S_{j,k}(\Gamma_2,\epsilon)$ for $(j,k)\neq (0,0)$.
\end{lemma}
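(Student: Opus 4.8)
The plan is to show that the Siegel $\Phi$-operator annihilates every form in $M_{j,k}(\Gamma_2,\epsilon)$, so that such a form vanishes on the entire boundary and is therefore a cusp form. I would work throughout with the identification, recalled above, of $M_{j,k}(\Gamma_2,\epsilon)$ with the subspace of $M_{j,k}(\Gamma_2[2])$ on which $\mathfrak{S}_6\cong\mathrm{Sp}(4,\mathbb{F}_2)$ acts by the sign character, and exploit the fact that $\mathfrak{S}_6$ permutes the rank-one (Klingen) boundary components of $\mathcal{A}_2[2]$ while the Siegel operator is equivariant for this action. The principle is that $\epsilon$ is visibly nontrivial at each cusp although it does not descend to the boundary form, and this discrepancy forces the boundary value to vanish.

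First I would describe the rank-one boundary components. In $W=\{(a_1,\dots,a_6):\sum a_i=0\}/\langle\mathbf{1}\rangle$ with the form $\sum a_ib_i$ every vector is isotropic, so the Klingen cusps correspond to the $15$ isotropic lines, i.e.\ to the $15$ duads $\{i,j\}$, with $\mathfrak{S}_6$ acting by its natural (transitive) action on duads. Fix the cusp $\ell$ attached to $\{1,2\}$. Its stabilizer is $\mathfrak{S}_2\times\mathfrak{S}_4=\mathrm{Sym}\{1,2\}\times\mathrm{Sym}\{3,4,5,6\}$ of order $48$, which is the reduction mod $2$ of the Klingen parabolic $P_\ell$; the Levi quotient is $\mathrm{GL}(\ell)\times\mathrm{Sp}(\ell^\perp/\ell)\cong\mathrm{SL}_2(\mathbb{F}_2)\cong\mathfrak{S}_3$, and it is this Levi that governs the weight and the level structure of the genus-one boundary form.

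The key step is to locate the transposition $(12)$ inside $P_\ell$. I would compute the induced action on $\ell^\perp/\ell$: its three nonzero elements are the three pair-partitions of $\{3,4,5,6\}$, on which $\mathfrak{S}_4$ acts through $\mathfrak{S}_4/V_4\cong\mathfrak{S}_3$ and on which $(12)$ acts trivially. Hence $(12)$ lies in the unipotent radical $U_\ell=\langle(12)\rangle\times V_4$ and maps to the identity in the Levi, so it acts trivially on the boundary genus-one modular form, since the unipotent radical only translates the degenerating variables and does not alter the leading Fourier--Jacobi coefficient extracted by $\Phi$. Now let $F\in M_{j,k}(\Gamma_2,\epsilon)$. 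By equivariance of $\Phi$ along $\ell$ one has $\Phi_\ell((12)^*F)=(12)\cdot\Phi_\ell(F)$; the right-hand side equals $\Phi_\ell(F)$ because $(12)$ acts trivially on the boundary, while the left-hand side equals $-\Phi_\ell(F)$ because $(12)^*F=\mathrm{sgn}((12))\,F=-F$. Therefore $\Phi_\ell(F)=0$, and transitivity of $\mathfrak{S}_6$ on the $15$ cusps gives $\Phi_{\ell'}(F)=0$ for every Klingen cusp $\ell'$. Thus $F$ is a cusp form, proving $M_{j,k}(\Gamma_2,\epsilon)=S_{j,k}(\Gamma_2,\epsilon)$; the case $(j,k)=(0,0)$ is set aside only because there the constants make the statement the trivial one, $M_{0,0}(\Gamma_2,\epsilon)=S_{0,0}(\Gamma_2,\epsilon)=0$.

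I expect the main obstacle to be making ``$(12)$ acts trivially on the boundary form'' precise. One must check both that the $\mathfrak{S}_6$-action on $M_{j,k}(\Gamma_2[2])$ is genuinely compatible with restriction to each Klingen component (the equivariance of $\Phi$ used above) and that an element of the unipotent radical $U_\ell$ acts trivially not only on the base $\mathfrak{H}_1$ and on the level structure but also on the full $\mathrm{Sym}^j(\mathbb{E})\otimes\det(\mathbb{E})^{k}$ automorphy factor at the boundary. Both reduce to the single assertion that $(12)$ maps to the identity in the Levi $\mathrm{GL}_1\times\mathrm{SL}_2$, which is exactly the finite-group computation carried out above; once that identification is secured, the vanishing of $\Phi_\ell(F)$ is immediate.
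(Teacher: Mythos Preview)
Your argument is correct and takes a genuinely different route from the paper's. The paper proceeds by case analysis: for $k=0$ it invokes Freitag's vanishing theorem; for $k$ odd it observes that the Siegel operator lands in $S_{j+k}(\Gamma_1[2])$, which vanishes because $-I\in\Gamma_1[2]$ and $j+k$ is odd; and for $k\ge2$ even it appeals to the explicit $\mathfrak{S}_6$-decomposition of the Eisenstein subspace $E_{j,k}(\Gamma_2[2])$ from \cite{CvdGG}, noting that the sign representation $s[1^6]$ never occurs there. Your approach, by contrast, is uniform in $(j,k)$: you exhibit an odd element of the unipotent radical at a fixed Klingen cusp, deduce $\Phi_\ell(F)=-\Phi_\ell(F)$, and then propagate to all fifteen cusps by transitivity. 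Your identification $U_\ell(\mathbb{F}_2)=\langle(12)\rangle\times V_4$ inside $\mathfrak{S}_2\times\mathfrak{S}_4$ is correct and is the heart of the matter.

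The technical point you flag---that a lift $\tilde\sigma\in U_\ell(\mathbb{Z})$ of $(12)$ acts trivially on $\Phi_\ell(F)$---does need a moment's care for vector-valued forms, since the automorphy factor $C\tau+D=D$ of an arbitrary element of $U_\ell(\mathbb{Z})$ is only unipotent, not the identity. One way through is to note that $(\rho(D)+I)\,\Phi_\ell(F)=0$ with $\rho(D)$ unipotent already forces $\Phi_\ell(F)=0$ over $\mathbb{C}$. A cleaner shortcut, bypassing the $\mathfrak{S}_6$ bookkeeping entirely, is to take the concrete element $\gamma=\left(\begin{smallmatrix}I&S\\0&I\end{smallmatrix}\right)$ with $S=\left(\begin{smallmatrix}0&0\\0&1\end{smallmatrix}\right)$: it lies in $U_\ell(\mathbb{Z})$, has automorphy factor $I$, manifestly satisfies $\Phi(F|_\gamma)=\Phi(F)$, and has $\epsilon(\gamma)=-1$ (e.g.\ by the formula in Section~12). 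Your approach is more conceptual and self-contained than the paper's, which for even $k$ leans on the external structure result for the Eisenstein part; the paper's argument, on the other hand, avoids any discussion of the Klingen parabolic and its unipotent radical.
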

\begin{proof}
In case $k=0$ and $j\neq 0$ it is well-known that $M_{j,0}(\Gamma_2,\epsilon)=(0)$,
see \cite[Satz1]{Freitag}. 
The Siegel operator $\Phi_2$ maps $M_{j,k}(\Gamma_2[2])$ to $S_{j+k}(\Gamma_1[2])$ which is
$(0)$ if $k$ is odd and $j$ is even. 
Since
$
M_{j,k}(\Gamma_2,\epsilon)
\subseteq
M_{j,k}(\Gamma_2[2])
$
we find $M_{j,k}(\Gamma_2,\epsilon)=S_{j,k}(\Gamma_2,\epsilon)$ for $k$ odd. 
For $k\geq 2$ even, the Eisenstein part $E_{j,k}(\Gamma_2[2])$ 
of $M_{j,k}(\Gamma_2[2])$, that is, the orthogonal complement of 
$S_{j,k}(\Gamma_2[2])$,  was described in \cite[Section 13]{CvdGG}
as an $\mathfrak{S}_6$-representation. From the description there
we see that the isotypical 
component $s[1^6]$ never occurs in $E_{j,k}(\Gamma_2[2])$;
the result follows since 
$S_{j,k}(\Gamma_2,\epsilon)=S_{j,k}(\Gamma_2[2])^{s[1^6]}$.
(Note that there is a misprint in the expression in
\cite[Prop.\ 13.1]{CvdGG}: $\Sym^k$ should
be read as $\Sym^{(j+k)/2}$.)
\end{proof}

The preceding lemma allows us to study cusp forms only.
The dimensions of the spaces $S_{j,k}(\Gamma_2,\epsilon)$ are known
by work of Tsushima as completed by Bergstr\"om (see \cite{BFvdG}).
The next table gives the Hilbert-Poincar\' e series of 
$\mathcal{M}^{\rm odd}_j(\Gamma_2,\epsilon)$ and 
$\mathcal{M}^{\rm ev}_j(\Gamma_2,\epsilon)$ as $R$-modules. 
We give only the numerators since in all cases we have
\[
\sum_{k\equiv_2 0  \, (\text{or} \, 1 )} 
\dim S_{j,k}(\Gamma_2,\epsilon)\, t^k=
\frac{N_j}{(1-t^4)(1-t^6)(1-t^{10})(1-t^{12})}\, ,
\] 
with $N_j$ a polynomial in $t$.

\begin{footnotesize}
\smallskip
\vbox{
\bigskip\centerline{\def\quad{\hskip 0.6em\relax}
\def\quod{\hskip 0.3em\relax }
\vbox{\offinterlineskip
\hrule
\halign{&\vrule#&\strut\quod\hfil#\quad\cr
height2pt  &\omit &&\omit &&\omit   &\cr
& $j$ && $k\, \bmod 2 $ && $N_j(t)$  &\cr 
\noalign{\hrule}
height2pt  &\omit &&\omit &&\omit   &\cr
& 0 && $1$ &&  $t^{5}$  &\cr 
&    && $0$ && $t^{30}$   &\cr 
\noalign{\hrule}
height2pt  &\omit &&\omit &&\omit   &\cr
& 2 && $1$  && $t^{9}+t^{11}+t^{17}$   &\cr 
&    && $0$ && $t^{16}+t^{22}+t^{24}$   &\cr 
\noalign{\hrule}
height2pt  &\omit &&\omit &&\omit   &\cr
& 4 && $1$ && $t^{9}+t^{11}+t^{13}+t^{15}+t^{17}$  &\cr 
&    && $0$ && $t^{14}+t^{16}+t^{18}+t^{20}+t^{22}$  &\cr 
\noalign{\hrule}
height2pt  &\omit &&\omit &&\omit   &\cr
& 6 && $1$ && $t^{3}+t^{5}+t^{11}+t^{13}+t^{17}+t^{19}+t^{21}$   &\cr 
&    && $0$ && $t^{8}+t^{10}+t^{12}+t^{16}+t^{18}+t^{24}+t^{26}$   &\cr 
\noalign{\hrule}
height2pt  &\omit &&\omit &&\omit   &\cr
& 8 && $1$ && $t^{5}+t^{7}+2\,t^{9}+t^{11}+t^{13}+t^{15}+t^{17}+t^{23}$  &\cr 
&    && $0$ && $t^{4}+t^{10}+t^{12}+t^{14}+t^{16}+2\, t^{18}+t^{20}+t^{22}$  &\cr 
\noalign{\hrule}
height2pt  &\omit &&\omit &&\omit   &\cr
& 10 && $1$ && $t^{5}+t^{7}+2\,t^{9}+2t^{11}+2t^{13}+2t^{15}+t^{17}$   &\cr 
&      && $0$ && $t^{8}+2\, t^{10}+2\,t^{12}+2t^{14}+2t^{16}+t^{18}+t^{20}$   &\cr 
\noalign{\hrule}
height2pt  &\omit &&\omit &&\omit   &\cr
& 12 && $1$ && $t^{3}+2\,t^{5}+t^{7}+2\, t^{9}+3\, t^{11}+2\, t^{13}+t^{17}+t^{19}-t^{23}+t^{27}$   &\cr 
&      && $0$ && $t^{2}+t^{4}+t^{6}+t^{8}+t^{10}+t^{12}+t^{14}+2\, t^{16}+2\, t^{18}+t^{20}+t^{22}+t^{24}-t^{28}$   &\cr 
} \hrule}
}}
\end{footnotesize}

\noindent 
For $j\in \left\{0,2,4,6,8,10\right\}$ and both for $k$ odd and even
the shape of the polynomials $N_j$ is 
as follows:
\[
N_{j}(t)=a_{k_{j,1}}\, t^{k_{j,1}}+\ldots+a_{k_{j,n}}\, t^{k_{j,n}}
\quad
\text{with}
\quad
n, a_{k_{j,i}} \in \ZZ_{>0}
\quad
\text{and}
\quad
\sum_{i=1}^{n}a_{k_{j,i}}=j+1.
\]
This suggests that the $R$-modules $\mathcal{M}_j^{\rm ev}(\Gamma,\epsilon)$ 
and  $\mathcal{M}_j^{\rm odd}(\Gamma,\epsilon)$ 
are generated by $j+1$ cusp forms 
with $a_{j,k_{j,i}}$  
generators of weight $(j,k_{j,i})$. As the table shows this does not hold
for $j=12$.

Therefore the strategy of the proof for the structure of the modules
will be to show first that 
there is no cusp form of weight $(j,k)$ for $k<k_{j,1}$ for 
$j\in \left\{0,2,4,6,8,10\right\}$. In the cases at hand 
this follows from the above
formula and the results in \cite{C-vdG-Ch}.
Then we will construct $j+1$ cusp forms and check 
that their wedge product is not identically $0$. 
In fact in all cases we find that the wedge product of the $j+1$ forms
is a nonzero multiple of a product of powers of $\chi_5$ and $\chi_{30}$.
This proves that the submodule they generate 
has the same Hilbert-Poincar\' e series as 
the whole module, hence that we found the whole module.
We will give the covariants that define the generators explicitly 
in a number of cases, but in view of their size we refer for the other cases
to \cite{BFvdG} where we will make these available.
\end{section}
\begin{section}{The scalar-valued cases}
In this section we deal with the modules of scalar-valued modular forms
with character. In this case the weight $(j,k)$ is of the form $(0,k)$
and we simply indicate it  by~$k$.

The diagonal element $\gamma_1={\rm diag}(1,-1,1,-1) \in \Gamma_2$ defines an
involution fixing the coordinates $\tau_{11}$ and $\tau_{22}$ and replacing $\tau_{12}$ by
$-\tau_{12}$. Its fixed point set is 
the locus defined by $\tau_{12}=0$. This defines the Humbert surface $H_1=\mathcal{A}_{1,1}$
parametrizing products of elliptic curves in~$\mathcal{A}_2$. 
There is another involution $\iota_2$ given by
$\gamma_2=(a,b;c,d)$ with $b=c=0$ and 
$a=d=\left( \begin{smallmatrix} 0 & 1 \\ 1 & 0 \\ \end{smallmatrix} \right)$ 
which interchanges $\tau_{11}$ and
$\tau_{22}$, but fixes $\tau_{12}$. The fixed point set of $\iota_2$
is the locus $\tau_{11}=\tau_{22}$ and defines the Humbert surface 
$H_4$ in~$\mathcal{A}_2$, see \cite{vdG}. 
One checks that the action on modular forms is as follows
$$
\gamma_1: f \mapsto (-1)^k \, f, \quad
\gamma_2: f \mapsto  (-1)^{k+1} \, f \qquad \text{for $f \in M_k(\Gamma_2,\epsilon)$.}
\eqno(1)
$$
Note $\epsilon(\gamma_2)=-1$.
It follows that $f\in M_k(\Gamma_2,\epsilon)$ vanishes on $H_1$ for $k$ odd
and on $H_4$ for $k$ even.

We have two modular forms $\chi_5$ 
and $\chi_{30}$ of weight $5$ and $30$ whose zero loci in $\mathcal{A}_2$ equal
$H_1$ and $H_4$. We recall their construction.

The cusp form $\chi_5\in S_{5}(\Gamma_2,\epsilon)$ 
is defined in terms of theta functions. 
For $(\tau,z) \in \mathfrak{H}\times \CC$ and 
$(\mu_1,\mu_2)$, $(\nu_1,\nu_2)$ in $\ZZ^2$ we have the standard
theta series with characteristics
$$
\vartheta_
{
\left[
\begin{smallmatrix}
\mu\\
\nu
\end{smallmatrix}
\right]
}
(\tau,z)=
\sum_{n=(n_1,n_2)\in \ZZ^2}
e^
{
i \pi 
(n+\mu/2)
(\tau\, (n+\mu/2)^t+2(z+\nu/2))
}.
$$
By letting $\mu$ and $\nu$ be vectors consisting of zeroes and ones with
$\mu^t \nu \equiv 0 \, (\bmod \, 2)$ and setting $z=0$ we obtain ten so-called
theta constants
and their product defines a cusp form of weight $5$ on $\Gamma_2$ with
character $\epsilon$:
$$
\chi_5=-\frac{1}{64}
\, \prod
\vartheta_
{
\left[
\begin{smallmatrix}
\mu\\
\nu
\end{smallmatrix}
\right]
} \, .
$$
Its Fourier expansion starts with
$$
\chi_5(\tau)=(u-1/u)XY+\ldots
$$
where $X=e^{\pi i \tau_1}$, $Y=e^{\pi i \tau_2}$ and $u=e^{\pi i \tau_{12}}$.
We note that $\chi_5^2=\chi_{10}$ and the vanishing locus of $\chi_{10}$ 
in $\mathcal{A}_2$ is $2H_1$.

In order to construct $\chi_{30}$ we consider the invariant $C_{15,0}$,
given in the table in Section 2. 
By the procedure of \cite{CFvdG} it provides a meromorphic cusp form 
of weight $15$ on $\Gamma_2$. 
One checks using Theorem~\ref{thm1} that the order of this form 
along $\mathcal{A}_{1,1}$
is $-3$. So we obtain a holomorphic modular form by multiplying by
$\chi_5^3$ and we set
$$
\chi_{30}= 2^{-11} 3^{11}\cdot 5^{11} \cdot 11 \cdot 13 \,   \nu(C_{15,0}) 
\chi_5^{3}\,  ;
$$
it is a cusp form in $S_{30}(\Gamma_2,\epsilon)$ whose
Fourier expansion starts with
$$
\chi_{30}(\tau)=(u+1/u)X^3Y^5-(u+1/u)X^5Y^3+\ldots\, .
$$
\begin{theorem}\label{Graded0}
We have
$
\mathcal{M}_0^{\rm odd}(\Gamma_2,\epsilon)= R\, \chi_5$ and
$\mathcal{M}_0^{\rm ev}(\Gamma_2,\epsilon)= R\, \chi_{30}$.
\end{theorem}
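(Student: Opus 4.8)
The plan is to identify each module as a rank-one free $R$-module by comparing Hilbert--Poincar\'e series. First I would reduce to cusp forms: by the Lemma of Section~5 we have $M_{0,k}(\Gamma_2,\epsilon)=S_{0,k}(\Gamma_2,\epsilon)$ for $(0,k)\neq(0,0)$, and $M_{0,0}(\Gamma_2,\epsilon)=(0)$ since a scalar form of weight~$0$ carrying a nontrivial character must vanish. Consequently the Hilbert--Poincar\'e series of $\mathcal{M}_0^{\rm odd}(\Gamma_2,\epsilon)$ and $\mathcal{M}_0^{\rm ev}(\Gamma_2,\epsilon)$ are exactly the entries in the table of Section~5 for $j=0$, namely
\[
\sum_{k\,{\rm odd}}\dim M_{0,k}(\Gamma_2,\epsilon)\,t^k=\frac{t^{5}}{(1-t^4)(1-t^6)(1-t^{10})(1-t^{12})}
\]
and
\[
\sum_{k\,{\rm even}}\dim M_{0,k}(\Gamma_2,\epsilon)\,t^k=\frac{t^{30}}{(1-t^4)(1-t^6)(1-t^{10})(1-t^{12})},
\]
that is, $t^{5}$ (resp.\ $t^{30}$) times the Hilbert--Poincar\'e series of $R$.

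Next I would place the two candidate generators inside the respective modules. Since $R$ consists of forms of even weight without character while $\chi_5$ and $\chi_{30}$ both carry the character $\epsilon$, every product $r\,\chi_5$ (resp.\ $r\,\chi_{30}$) with $r\in R$ is again a scalar-valued modular form with character $\epsilon$, of odd weight (resp.\ of even weight). Hence $R\,\chi_5\subseteq\mathcal{M}_0^{\rm odd}(\Gamma_2,\epsilon)$ and $R\,\chi_{30}\subseteq\mathcal{M}_0^{\rm ev}(\Gamma_2,\epsilon)$.

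To conclude I would use that $R$ is an integral domain and that $\chi_5$ and $\chi_{30}$ are nonzero, which is visible from the leading terms of the Fourier expansions computed above. Multiplication by a nonzero element of the domain $R$ is then injective, so $R\,\chi_5$ and $R\,\chi_{30}$ are free $R$-modules of rank one, with Hilbert--Poincar\'e series equal to $t^{5}$ and $t^{30}$ times that of $R$. These agree with the series found in the first step; since a graded submodule all of whose graded pieces have the same finite dimension as those of the ambient module must coincide with it, the two inclusions are equalities, giving $\mathcal{M}_0^{\rm odd}(\Gamma_2,\epsilon)=R\,\chi_5$ and $\mathcal{M}_0^{\rm ev}(\Gamma_2,\epsilon)=R\,\chi_{30}$. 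This is the rank-one instance of the general wedge-product strategy announced in Section~5, the ``wedge product of $j+1$ forms'' degenerating for $j=0$ to the single nonzero form $\chi_5$ (resp.\ $\chi_{30}$). The only substantive input is the dimension count behind the table (the results of Tsushima and Bergstr\"om) together with the nonvanishing of the two constructed forms; everything else is formal, so I expect whatever difficulty there is to reside entirely in having those dimension formulas available, which we may assume here.
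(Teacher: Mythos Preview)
Your proposal is correct and follows essentially the same approach as the paper: establish the obvious inclusions $R\,\chi_5\subseteq\mathcal{M}_0^{\rm odd}(\Gamma_2,\epsilon)$ and $R\,\chi_{30}\subseteq\mathcal{M}_0^{\rm ev}(\Gamma_2,\epsilon)$, then conclude equality by matching Hilbert--Poincar\'e series. The paper's proof is simply a terser version of yours; your explicit treatment of the case $(j,k)=(0,0)$ and the observation that $R$ is a domain (so $R\,\chi_5$ and $R\,\chi_{30}$ are free of rank one) make implicit steps explicit but add nothing new.
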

\begin{proof}
Clearly $\mathcal{M}_0^{\rm odd}(\Gamma_2,\epsilon)$ contains $R\, \chi_5$ and
$\mathcal{M}_0^{\rm ev}(\Gamma_2,\epsilon)$ contains $R\, \chi_{30}$.
The generating function for the dimensions shows that $\chi_5$ 
(resp.\ $\chi_{30}$) generates.
\end{proof}

\begin{remark}
We know the cycle classes of the closures of $H_1$ and $H_4$ in
the compactified moduli space $\tilde{\mathcal{A}}_2$. In the divisor class
group with rational coefficients of $\tilde{\mathcal{A}}_2$ we have
$$
5\lambda_1 = [\overline{H}_1]+[D], \quad 30\lambda_1=[\overline{H}_4]+[D]
$$
with $D$ the divisor at infinity of $\tilde{\mathcal{A}}_2$, and $\lambda_1$ the
first Chern class of the determinant of the Hodge bundle, 
see \cite[Thm.~2.6]{vdG}.
From this it follows that the vanishing locus of $\chi_{30}$ in $\mathcal{A}_2$
is $H_4$. Then (1) implies that for $k$ odd (resp.\ $k$ even) 
any $f\in M_k(\Gamma_2,\epsilon)$
is divisible by $\chi_5$ (resp.\ by $\chi_{30}$). 
This implies the theorem as well.
\end{remark}

For later identifications (for example in the proof of Theorem \ref{thmj=6})
we need the restriction of $\chi_{6,3}$ to the Humbert
surface $H_4$. This surface can be given by $\tau_{11}=\tau_{22}$, or
equivalently by $\tau_{12}=1/2$. Let $\chi$ denote the Dirichlet character 
modulo $4$ defined
by the Kronecker symbol $\left( \frac{-4}{\cdot} \right)$. 
The space $S_3^{\rm new}(\Gamma_0(16),\chi)$ is generated by $\eta^6(2\tau)$.
The 
space $S_5^{\rm new}(\Gamma_0(16),\chi)$ has dimension $2$ and a basis of eigenforms 
$g', g^{\prime\prime}$ with Fourier expansions 
$$
q - 8\sqrt{-3} \, q^3 +18\, q^5 -16\sqrt{-3}\, q^7 - 111\, q^9+\ldots 
$$
and similarly $S_{7}^{\rm new}(\Gamma_0(16),\chi)$ has dimension $2$
and a basis of eigenforms $f^{\prime}, f^{\prime\prime}$ with Fourier
expansions 
$$
q - 16\sqrt{-3}\, q^3 - 150\, q^5 - 352\sqrt{-3} \, q^7 - 39\, q^9+\ldots 
$$

\begin{lemma} The restriction of $\chi_{6,3}$ to $H_4$ is given by
$$
\chi_{6,3}\left(
\begin{smallmatrix}
\tau_1 & 1/2\\
1/2 & \tau_2
\end{smallmatrix}
\right)=2\, i\, 
\left[
\begin{smallmatrix}
16\, \eta^{18}(2\, \tau_1) \otimes \eta^6(2\,\tau_2)\\
0\\
F_1(\tau_1) \otimes F_2(\tau_2)\\
0\\
F_2(\tau_1) \otimes F_1(\tau_2)\\
0\\
16\, \eta^{6}(2\, \tau_1) \otimes \eta^{18}(2\, \tau_2)
\end{smallmatrix}
\right]
$$
where
$$
F_1=\frac{3+\sqrt{-3}}{6}\, f'+\frac{3-\sqrt{-3}}{6}\, f^{\prime\prime}
\quad
\text{and}
\quad
F_2=\frac{3+\sqrt{-3}}{6}\, g'+\frac{3-\sqrt{-3}}{6}\, g^{\prime\prime} \, .
$$
\end{lemma}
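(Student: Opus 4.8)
The plan is to compute the restriction directly and to identify each of the seven components of $\chi_{6,3}\circ j$ as a product of elliptic modular forms, using weight bookkeeping, two involutions, and a finite Fourier coefficient comparison. Write $j\colon \mathfrak{H}_1\times\mathfrak{H}_1\to\mathfrak{H}_2$ for the embedding $(\tau_1,\tau_2)\mapsto\left(\begin{smallmatrix}\tau_1 & 1/2\\ 1/2 & \tau_2\end{smallmatrix}\right)$, whose image lies in $H_4$, and set $\Phi=\chi_{6,3}\circ j$ with components $\Phi_0,\dots,\Phi_6$. For an element of the stabiliser of $H_4$ whose automorphy factor is block-diagonal, the monomial $x_1^{6-i}x_2^i$ in $\Sym^6$ contributes $(c_1\tau_1+d_1)^{6-i}(c_2\tau_2+d_2)^{i}$ and the twist $\det^3$ contributes $(c_1\tau_1+d_1)^{3}(c_2\tau_2+d_2)^{3}$, so $\Phi_i$ is a modular form of weight $9-i$ in $\tau_1$ and $i+3$ in $\tau_2$. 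The first structural input I would establish is that, in each variable separately, $\Phi_i$ is a form on $\Gamma_0(16)$ with nebentypus $\chi=\left(\tfrac{-4}{\cdot}\right)$; this should follow from analysing the stabiliser in $\Gamma_2$ of the locus $\tau_{12}=1/2$ and restricting both the weight $\Sym^6\otimes\det^3$ and the character $\epsilon$ to it, or equivalently from the explicit theta/covariant description $\chi_{6,3}=\nu(C_{1,6})\,\chi_5$ with $\nu(C_{1,6})=\chi_{6,-2}$ regular along $H_4$. Since $\chi_{6,3}$ is a cusp form, each nonzero factor will be cuspidal.

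Next I would exploit two involutions. There is an element $\gamma_1'=\gamma_T\gamma_1\in\Gamma_2$, the composition of $\gamma_1=\mathrm{diag}(1,-1,1,-1)$ with the translation $\gamma_T$ by $\left(\begin{smallmatrix}0&1\\1&0\end{smallmatrix}\right)$, that fixes the locus $\tau_{12}=1/2$ pointwise (since $\gamma_1$ sends $\tau_{12}=1/2$ to $-1/2$ and $\gamma_T$ sends it back). On $\Sym^6\otimes\det^3$ it acts on the $i$th component by $(-1)^{i+1}$, while $\epsilon(\gamma_1)=1$ and $\epsilon(\gamma_T)=-1$ (computed by the method of the last section); hence $\gamma_1'$ multiplies $\Phi_i$ by $(-1)^{i+1}\cdot(-1)=(-1)^i$, forcing $\Phi_1=\Phi_3=\Phi_5=0$. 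The involution $\gamma_2$, which swaps $\tau_1$ and $\tau_2$, satisfies $\chi_{6,3}(\gamma_2\tau)=\Sym^6\!\left(\begin{smallmatrix}0&1\\1&0\end{smallmatrix}\right)\chi_{6,3}(\tau)$, the signs $\epsilon(\gamma_2)=-1$ and $\det^3=-1$ cancelling; since $\Sym^6\!\left(\begin{smallmatrix}0&1\\1&0\end{smallmatrix}\right)$ reverses components, this gives $\Phi_i(\tau_2,\tau_1)=\Phi_{6-i}(\tau_1,\tau_2)$ and reduces the problem to $\Phi_0$ (weight $(9,3)$) and $\Phi_2$ (weight $(7,5)$).

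Finally I would identify these two components inside the known finite-dimensional spaces. For $\Phi_0$ the weight-$3$ factor is a multiple of $\eta^6(2\tau)$, the generator of $S_3^{\rm new}(\Gamma_0(16),\chi)$, so $\Phi_0=h(\tau_1)\otimes\eta^6(2\tau_2)$ with $h$ a weight-$9$ cusp form; comparing the first Fourier coefficients identifies $h$ with $16\,\eta^{18}(2\tau_1)$, and $\gamma_2$-symmetry then yields $\Phi_6$. For $\Phi_2$ the factors lie in the two-dimensional spaces $S_7^{\rm new}(\Gamma_0(16),\chi)=\langle f',f''\rangle$ and $S_5^{\rm new}(\Gamma_0(16),\chi)=\langle g',g''\rangle$; matching sufficiently many Fourier coefficients (a Sturm bound making this rigorous) shows $\Phi_2$ is the pure tensor $F_1\otimes F_2$ with the stated coefficients $\tfrac{3\pm\sqrt{-3}}{6}$, and $\gamma_2$-symmetry gives $\Phi_4$. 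The overall scalar $2i$ is read off from the leading term, using the Fourier expansion of $\chi_5$ recorded above.

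The main obstacle is twofold. First, pinning down exactly the level $16$ and the nebentypus $\chi$ requires a careful study of the stabiliser of $H_4$ in $\Gamma_2$ and of how $\epsilon$ and the automorphy factor of $\Sym^6\otimes\det^3$ restrict to it; without this one cannot place the factors in the small spaces that make the coefficient comparison decisive. Second, resolving the rank-one tensor structure of $\Phi_2$ and, in particular, the precise combinations $\tfrac{3\pm\sqrt{-3}}{6}\,f'+\cdots$ with the non-rational coefficient $\sqrt{-3}$ is delicate: one must carry the Hecke eigenform structure (the relevant forms are essentially of CM type for $\QQ(\sqrt{-3})$) through the computation so that the matched Fourier coefficients determine the $\sqrt{-3}$-combination unambiguously, rather than only up to Galois conjugation.
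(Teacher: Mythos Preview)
The paper states this lemma without proof, so there is no argument in the text to compare yours against; presumably the authors verified it by direct computation of the Fourier expansion of $\chi_{6,3}$ and substitution $\tau_{12}=1/2$. Your outline is the natural structural approach and is essentially sound: the involution $\gamma_1'=\gamma_T\gamma_1$ does fix the locus $\tau_{12}=1/2$ pointwise, your sign bookkeeping $(-1)^{i+1}\cdot\epsilon(\gamma_1')=(-1)^i$ is correct (one checks $\epsilon(\gamma_T)=-1$ via the formula in the last section), and the $\gamma_2$-symmetry $\Phi_i(\tau_2,\tau_1)=\Phi_{6-i}(\tau_1,\tau_2)$ holds because $\epsilon(\gamma_2)=-1$ and $\det^3=-1$ cancel. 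This cleanly reduces everything to identifying $\Phi_0$ and $\Phi_2$.

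Two points deserve sharpening before the argument is complete. First, your modularity analysis places the factors of $\Phi_0$ and $\Phi_2$ in the \emph{full} spaces $S_k(\Gamma_0(16),\chi)$ (once you have nailed down the level), not in the \emph{new} subspaces $S_k^{\rm new}(\Gamma_0(16),\chi)$ that the paper names; you must either verify that there are no oldforms at weights $3,5,7,9$ for this level and character, or run the Fourier comparison in the full space. Second, as you yourself flag, nothing a priori forces $\Phi_2$ to be a pure tensor in $S_7\otimes S_5$: the rank-one conclusion must be an \emph{output} of the coefficient comparison in the (at most $4$-dimensional) tensor product, not an input. With those two points handled and a Sturm bound to make the matching rigorous, your method goes through.
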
 

\end{section}
\begin{section}{The case $j=2$}

We start with the case $k$ odd.
\begin{theorem}
The $R$-module $\mathcal{M}_2^{\rm odd}(\Gamma_2,\epsilon)$ is  free 
with three generators of weight $(2,9)$, $(2,11)$ and $(2,17)$.
\end{theorem}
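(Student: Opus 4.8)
The plan is to realize $\mathcal{M}_2^{\rm odd}(\Gamma_2,\epsilon)$ as a free $R$-module by exhibiting three explicit generators, proving they are $R$-independent, and then matching Hilbert--Poincar\'e series. By the Lemma of Section~5 all forms here are cusp forms ($j=2\neq0$), and the dimension data of Tsushima--Bergstr\"om recorded in the table give
$$
\sum_{k\ \mathrm{odd}}\dim S_{2,k}(\Gamma_2,\epsilon)\,t^k=\frac{t^9+t^{11}+t^{17}}{(1-t^4)(1-t^6)(1-t^{10})(1-t^{12})};
$$
in particular $S_{2,k}(\Gamma_2,\epsilon)=0$ for odd $k<9$, so any generating set must begin in weight $(2,9)$.

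First I would build the three candidate generators from the covariants $C_{3,2}$, $C_{5,2}$, $C_{7,2}$ of the table in Section~2, all of $x$-degree $2$, hence mapped by $\nu$ to $\mathrm{Sym}^2$-valued (that is, $j=2$) meromorphic modular forms. By the weight rule $\nu(C_{a,2})$ has weight $(2,a-1)$, so $\nu(C_{5,2}),\nu(C_{7,2}),\nu(C_{3,2})$ have weights $(2,4),(2,6),(2,2)$. The target weights then force the multipliers: setting
$$
G_9=\chi_5\,\nu(C_{5,2}),\qquad G_{11}=\chi_5\,\nu(C_{7,2}),\qquad G_{17}=\chi_5^{3}\,\nu(C_{3,2})
$$
yields the weights $(2,9),(2,11),(2,17)$, each with character $\epsilon$ since the power of $\chi_5$ is odd. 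Here Theorem~\ref{thm1} enters: substituting $u=c^2t^3,\ v=ct^2$ into the families $g=(x^3+vx+u)h$ and reading off $\mathrm{ord}_{E_3}$ of each covariant confirms that the chosen powers of $\chi_5$ do clear the poles along $\mathcal{A}_{1,1}$, so the $G_i$ are genuinely holomorphic.

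The heart of the proof is $R$-independence, which I would obtain through the wedge (determinant) $W=G_9\wedge G_{11}\wedge G_{17}$. Since $\bigwedge^{3}\mathrm{Sym}^2(\EE)\cong\det(\EE)^{3}$, $W$ is a scalar cusp form of weight $3+9+11+17=40$ with character $\epsilon^{3}=\epsilon$. As $\nu$ is substitution of $\chi_{6,-2}$, it commutes with taking determinants, so $W=\chi_5^{5}\,\nu(D)$ with $D=C_{5,2}\wedge C_{7,2}\wedge C_{3,2}$ a covariant of bidegree $(15,0)$, i.e.\ an invariant of degree $15$. The space of degree-$15$ invariants of binary sextics is one-dimensional, spanned by $C_{15,0}$; hence $D=c\,C_{15,0}$ for a constant $c$, and evaluating both sides on a single sextic of nonvanishing discriminant determines $c$ and shows $c\neq0$. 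Recalling from Section~6 that $\chi_{30}$ is a nonzero multiple of $\nu(C_{15,0})\,\chi_5^{3}$ and that $\chi_5^{2}=\chi_{10}$, we conclude $W=c'\,\chi_{10}\chi_{30}$ with $c'\neq0$; in particular $W\neq0$.

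Finally, $W\neq0$ forces $G_9,G_{11},G_{17}$ to be linearly independent over $\mathrm{Frac}(R)$, hence over $R$, so they span a free submodule $N\cong R(-9)\oplus R(-11)\oplus R(-17)$ with Hilbert--Poincar\'e series exactly $(t^9+t^{11}+t^{17})/\big((1-t^4)(1-t^6)(1-t^{10})(1-t^{12})\big)$. As $N\subseteq\mathcal{M}_2^{\rm odd}(\Gamma_2,\epsilon)$ and both modules have the same series by the table, $N_k$ and $\mathcal{M}_2^{\rm odd}(\Gamma_2,\epsilon)_k$ have equal (finite) dimension in every weight $k$, forcing equality. I expect the one genuinely computational obstacle to be the verification that $c\neq0$: this requires the explicit covariant $D$ (equivalently an explicit evaluation of the three $\mathrm{Sym}^2$-components), together with the preliminary $\mathrm{ord}_{E_3}$-computations from Theorem~\ref{thm1} needed to guarantee that the $G_i$ are holomorphic of the stated weights.
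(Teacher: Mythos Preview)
Your overall plan is exactly the paper's: produce three candidate cusp forms from covariants with $b=2$, check holomorphy via Theorem~\ref{thm1}, verify that their wedge is a nonzero multiple of $\chi_5^2\chi_{30}\in S_{40}(\Gamma_2,\epsilon)$, and conclude by matching Hilbert--Poincar\'e series. The third generator $G_{17}=\chi_5^3\,\nu(C_{3,2})$ agrees (up to scalar) with the paper's $F_{2,17}$, and your observation that the degree-$15$ invariant $C_{5,2}\wedge C_{7,2}\wedge C_{3,2}$ must be a scalar multiple of $C_{15,0}$ is a nice shortcut.

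However, there is a genuine gap: your $G_9=\chi_5\,\nu(C_{5,2})$ and $G_{11}=\chi_5\,\nu(C_{7,2})$ are \emph{not} holomorphic, so they do not lie in $\mathcal{M}_2^{\rm odd}(\Gamma_2,\epsilon)$ at all. Using Theorem~\ref{thm1} one finds ${\rm ord}_{E_3}(C_{3,2})=0$ (since ${\rm ord}_{\mathcal{A}_{1,1}}\nu(C_{3,2})=-3$) and ${\rm ord}_{E_3}(C_{2,0})=0$, hence ${\rm ord}_{E_3}(C_{2,0}C_{3,2})=0$. But the paper's combination $\xi_1=4\,C_{2,0}C_{3,2}-15\,C_{5,2}$ has ${\rm ord}_{E_3}(\xi_1)=2$; this forces ${\rm ord}_{E_3}(C_{5,2})=0$ as well (otherwise $\xi_1$ would inherit order $0$ from $C_{2,0}C_{3,2}$), with the specific coefficients $4:{-15}$ arranged precisely so that two successive leading terms cancel. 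Consequently ${\rm ord}_{\mathcal{A}_{1,1}}\nu(C_{5,2})=2\cdot 0-5=-5$, and multiplying by a single $\chi_5$ leaves a pole of order $4$ along $\mathcal{A}_{1,1}$. The same reasoning applied to $\xi_2$ (which contains $C_{2,0}^2C_{3,2}$) shows ${\rm ord}_{E_3}(C_{7,2})=0$, so $G_{11}$ has a pole of order $6$. The whole point of Theorem~\ref{thm1} in this paper is to find the tuned linear combinations that raise ${\rm ord}_{E_3}$ enough; the bare generators $C_{5,2}$, $C_{7,2}$ do not suffice. (It is a pleasant accident that your wedge $G_9\wedge G_{11}\wedge G_{17}$ is still holomorphic, being a multiple of $\chi_5^2\chi_{30}$, but that does not rescue the argument since the $G_i$ themselves are not in the module.) Replacing $C_{5,2}$ and $C_{7,2}$ by $\xi_1$ and $\xi_2$ as in the paper repairs the proof completely.
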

\begin{proof}
We recall that the numerator $N_2$ of the Hilbert-Poincar\'e series is $t^9+t^{11}+t^{17}$.
We construct the three generators by considering the covariants
$$
\begin{aligned}
\xi_1&=4\, C_{2,0}C_{3,2}-15\, C_{5,2},  \\
\xi_2&=32 \, C_{2,0}^2 C_{3,2}+135 \, C_{2,0}C_{5,2}-300\, C_{3,2}C_{4,0} - 15750 \, C_{7,2},  \\
\xi_3&=C_{3,2}.  \\
\end{aligned}
$$
These three covariants define meromorphic modular forms
vanishing with order $-1$, $-1$, $-3$ along $\mathcal{A}_{1,1}$ 
(by Theorem \ref{thm1}), so we obtain holomorphic modular forms
$$
F_{2,9}= -\frac{3375}{4} \nu(\xi_1)\chi_5, \quad 
F_{2,11}=-\frac{10125}{8} \nu(\xi_2)\chi_5,  \quad 
F_{2,17}=\frac{1125}{2}\nu(\xi_3)\chi_5^3
$$
of weights $(2,9)$, $(2,11)$ and $(2,17)$ and their
Fourier expansions start as
$$
\begin{footnotesize}
F_{2,9}=
\left(
\begin{smallmatrix}
u - 1/u\\
u + 1/u\\
u - 1/u
\end{smallmatrix}
\right)XY+\ldots
\qquad
F_{2,11}=
\left(
\begin{smallmatrix}
u - 1/u\\
u + 1/u\\
u - 1/u
\end{smallmatrix}
\right)XY+\ldots
\end{footnotesize}
$$
and
$$
F_{2,17}=\begin{footnotesize}
\left(
\begin{smallmatrix}
{u}^{3}+9\,u-9\,{u}^{-1}-{u}^{-3}\\
{u}^{3}+71\,u +71\,{u}^{-1}+{u}^{-3}\\
{u}^{3}+9\,u-9\,{u}^{-1}-{u}^{-3} \\
\end{smallmatrix}
\right)X^3Y^3
+\ldots
\end{footnotesize}
$$
To prove the theorem we have to show that these three generators satisfy
$$
F_{2,9}\wedge F_{2,11}\wedge F_{2,17}\neq 0.
$$
Note that $\det({\rm Sym}^j({\EE}))=\det({\EE})^{j(j+1)/2}$, so this is a form in
$S_{40}(\Gamma_2,\epsilon)$.
The Fourier expansion of $F_{2,9} \wedge F_{2,11} \wedge F_{2,17}$ starts with
\[
86400 \, 
(
( -{u}^{3}+u+{u}^{-1}-{u}^{-3}) {Y}^{7}{X}^{5}+( {u}^{3}-u-{u}^{-1}+{u}^{-3}) {Y}^{5}{X}^{7}+ 
\ldots
)
\]
and this shows the result.
\end{proof}

\begin{remark}
The space $S_{40}(\Gamma_2,\epsilon)$
is $2$-dimensional, generated by $\chi_{5}^2\chi_{30}$ and $E_4E_6\chi_{30}$.
We check that 
$ F_{2,9} \wedge F_{2,11} \wedge F_{2,17}=-86400 \, \chi_5^2 \chi_{30}$.
\end{remark}

The case $k$ even is similar.

\begin{theorem}
The $R$-module $\mathcal{M}^{\rm ev}_2(\Gamma_2,\epsilon)$ is free
with generators of weight $(2,16)$, $(2,22)$ and $(2,24)$.
\end{theorem}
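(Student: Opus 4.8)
The plan is to run the odd-case argument with the parities reversed. In the even case the numerator of the Hilbert--Poincar\'e series is $N_2(t)=t^{16}+t^{22}+t^{24}$, so I look for three generators of weights $(2,16)$, $(2,22)$ and $(2,24)$, and the whole content of the proof is to exhibit three such forms whose wedge product does not vanish identically.

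To produce the candidates, recall that a covariant $C$ of bi-degree $(a,2)$ has $\nu(C)$ of weight $(2,a-1)$ and trivial character, so that $\chi_5^{p}\,\nu(C)$ has weight $(2,a-1+5p)$ and character $\epsilon^{p}$. Landing in $\mathcal{M}_2^{\rm ev}(\Gamma_2,\epsilon)$ therefore forces $a$ even and $p$ odd (exactly the opposite parity pattern from the odd case, where $\xi_1,\xi_2,\xi_3$ had odd degree). By Theorem~\ref{thm1} the order of $\nu(C)$ along $\mathcal{A}_{1,1}$ is $2\,{\rm ord}_{E_3}(C)-a$, so I must choose $C$ vanishing on $E_3$ to high enough order that the chosen power of $\chi_5$ clears the pole: with $p=1$ this needs $a\in\{12,18,20\}$ and ${\rm ord}_{E_3}(C)\ge a/2$, while with $p=3$ one may instead use the smaller bi-degrees $(8,2)$ and $(10,2)$ (built from $C_{8,2}$, $C_{10,2}$ and $C_{2,0}$-multiples) provided ${\rm ord}_{E_3}(C)\ge (a-3)/2$. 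As with $\xi_1$ and $\xi_2$ in the odd case, the covariants $\eta_1,\eta_2,\eta_3$ will be explicit linear combinations of products of the $26$ generators of $\mathcal{C}$ whose coefficients are tuned so that the low-order terms cancel on $E_3$; I read off ${\rm ord}_{E_3}(\eta_i)$ by substituting $u=c^2t^3$, $v=ct^2$ into $g=(x^3+vx+u)h$ and taking the $t$-order, exactly as in Theorem~\ref{thm1}, and then set $G_{2,16},G_{2,22},G_{2,24}$ to be the resulting normalized forms $\chi_5^{p_i}\,\nu(\eta_i)$.

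To prove that these generate, I form $G_{2,16}\wedge G_{2,22}\wedge G_{2,24}$. Since $\det(\Sym^2\EE)=\det(\EE)^{3}$ this is a scalar form of weight $16+22+24+3=65$ and character $\epsilon^{3}=\epsilon$, hence (by the Lemma) a cusp form in $S_{65}(\Gamma_2,\epsilon)$. I compute its first Fourier coefficients in the variables $X=e^{\pi i\tau_1}$, $Y=e^{\pi i\tau_2}$, $u=e^{\pi i\tau_{12}}$ and verify that the leading one is nonzero; following the pattern of the general strategy I expect the value to be a nonzero scalar multiple of a product of powers of $\chi_5$ and $\chi_{30}$, and weight $65$ with character $\epsilon$ leaves only $\chi_5^{13}$ or $\chi_5\chi_{30}^{2}$ as possibilities. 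Nonvanishing of the wedge makes the $R$-module map $R(-16)\oplus R(-22)\oplus R(-24)\to\mathcal{M}_2^{\rm ev}(\Gamma_2,\epsilon)$ injective, so the free submodule it generates has Hilbert--Poincar\'e series $N_2(t)/((1-t^4)(1-t^6)(1-t^{10})(1-t^{12}))$, equal to that of the whole module; since it is contained in the module, the two coincide and the module is free on $G_{2,16}$, $G_{2,22}$, $G_{2,24}$.

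The main obstacle is the explicit construction of the $\eta_i$: in each bi-degree one has to find a combination of covariants vanishing along $E_3$ to precisely the prescribed order, and then confirm through the $t$-expansion that the order is exactly right, so that the pole along $\mathcal{A}_{1,1}$ is cancelled without over-shooting to a heavier weight. Once the $\eta_i$ are in hand the remaining step, checking that the wedge is not identically zero, is a direct but lengthy Fourier-coefficient computation, just as in the odd case.
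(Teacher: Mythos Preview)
Your plan is exactly the one the paper carries out: it takes $\eta_1$ of bi-degree $(12,2)$ with $p=1$ for weight $(2,16)$ and $\eta_2=C_{8,2}$, $\eta_3=7\,C_{2,0}C_{8,2}-110\,C_{10,2}$ with $p=3$ for weights $(2,22)$, $(2,24)$, and the wedge turns out to be $-2880\,\chi_5\,\chi_{30}^2$, confirming your expectation. The only thing missing from your proposal is the explicit covariants themselves and the resulting Fourier computation, which is the computational core; the strategy, parity bookkeeping, and the weight/character check for the wedge are all correct and identical to the paper's argument.
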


\begin{proof}
We use the covariants
$$
\begin{aligned}
\xi_1&=
1211\,C_{2,0}^{2}C_{8,2}-8910\,C_{2,0}C_{10,2}-5250\,C_{4,0}C_{8,2}+277200\,C_{12,2}, \\
\xi_2&=C_{8,2}, \quad
\xi_3=7\,C_{2,0}C_{8,2}-110\,C_{10,2} \\
\end{aligned}
$$
and set
\begin{footnotesize}
\begin{align*}
F_{2,16}=\frac{34171875}{2048} \nu(\xi_1) \chi_5&=
\left(
\begin{smallmatrix}
0\\
2(u - 1/u)\\
u + 1/u
\end{smallmatrix}
\right)XY^3+
\left(
\begin{smallmatrix}
-(u + 1/u)\\
-2(u - 1/u)\\
0
\end{smallmatrix}
\right)X^3Y+
\ldots\\
F_{2,22}=\frac{26578125}{8} \nu(\xi_2) \chi_5^3&=
\left(
\begin{smallmatrix}
u + 1/u\\
0\\
-(u +1/u)
\end{smallmatrix}
\right)X^3Y^3+\ldots
\\
F_{2,24}=-\frac{102515625}{16} \nu(\xi_3)\chi_5^3&=
\left(
\begin{smallmatrix}
u + 1/u\\
0\\
-(u +1/u)
\end{smallmatrix}
\right)X^3Y^3+\ldots
\end{align*}
\end{footnotesize}
By the criterion these are holomorphic modular forms of weight $(2,16)$,
$(2,22)$ and $(2,24)$.
The Fourier expansion of $F_{2,16} \wedge F_{2,22} \wedge F_{2,24}$ starts with
\[
F_{2,16} \wedge F_{2,22} \wedge F_{2,24}=-2880 \, 
(u^{3}+u-u^{-1}-u^{-3})\,  X^7Y^{11}+\ldots
\]
and in fact equals $-2880 \, \chi_5 \, \chi_{30}^2$.
This finishes the proof in view of the Hilbert-Poincar\'e series.
\end{proof}
\end{section}
\begin{section}{The case $j=4$.}

\begin{theorem}
The $R$-module $\mathcal{M}_4^{\rm odd}(\Gamma_2,\epsilon)$ is free with generators
of weight $(4,9)$, $(4,11)$, $(4,13)$, $(4,15)$ and $(4,17)$.
\end{theorem}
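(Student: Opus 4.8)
The plan is to repeat verbatim the strategy already carried out for $j=2$: produce five explicit cusp forms of the prescribed weights and prove freeness by showing that their wedge product does not vanish identically.

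First I would recall the input from Section~5. By the dimensions of $S_{4,k}(\Gamma_2,\epsilon)$ (due to Tsushima and Bergstr\"om, \cite{BFvdG}) the Hilbert--Poincar\'e series of $\mathcal{M}_4^{\rm odd}(\Gamma_2,\epsilon)$ is
\[
\frac{t^{9}+t^{11}+t^{13}+t^{15}+t^{17}}{(1-t^4)(1-t^6)(1-t^{10})(1-t^{12})},
\]
so in particular $S_{4,k}(\Gamma_2,\epsilon)=(0)$ for odd $k<9$, and the numerator has coefficients summing to $5=j+1=\dim\mathrm{Sym}^4(V)$. Hence a free $R$-module with one generator in each weight $(4,9),(4,11),(4,13),(4,15),(4,17)$ has exactly this Hilbert series; it therefore suffices to exhibit five forms of these weights that are linearly independent over $R$.

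Next I would construct the generators. For each target $k\in\{9,11,13,15,17\}$ I would look for a covariant $\xi$ of bi-degree $(a,4)$ (a suitable linear combination of products of the generators listed in Section~2, chosen so as to reach the required degree $a$) together with an odd exponent $m$, at least as large as the pole order of $\nu(\xi)$ along $\mathcal{A}_{1,1}$ computed by Theorem~\ref{thm1}, such that $a-2+5m=k$. Then I set $F_{4,k}:=c_k\,\nu(\xi)\,\chi_5^{m}$ for a suitable nonzero constant $c_k$; Theorem~\ref{thm1} guarantees holomorphy, while the oddness of $m$ produces the character $\epsilon$, so $F_{4,k}$ is a holomorphic modular form of weight $(4,k)$ with character $\epsilon$. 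I would record the leading terms of the Fourier expansions in the variables $u=e^{\pi i\tau_{12}}$, $X=e^{\pi i\tau_1}$, $Y=e^{\pi i\tau_2}$ to confirm that each $F_{4,k}$ is nonzero and of the asserted weight.

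Finally I would compute the wedge product
\[
F_{4,9}\wedge F_{4,11}\wedge F_{4,13}\wedge F_{4,15}\wedge F_{4,17}.
\]
Since $\det(\mathrm{Sym}^4(\EE))=\det(\EE)^{10}$ and the determinant weights sum to $9+11+13+15+17=65$, this is a scalar-valued cusp form in $S_{75}(\Gamma_2,\epsilon)$; by Theorem~\ref{Graded0} any such form is $\chi_5$ times an element of $R$, and the general principle of Section~5 predicts a nonzero multiple of a product of powers of $\chi_5$ and $\chi_{30}$, necessarily of the shape $\chi_5^{a}\chi_{30}^{b}$ with $5a+30b=75$ and $a+b$ odd, i.e.\ $\chi_5^{15}$ or $\chi_5^{3}\chi_{30}^{2}$. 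Exhibiting one nonvanishing Fourier coefficient of the wedge shows it is not identically zero; this explicit expansion is the only genuine computation and is the main obstacle, as it requires the precise covariants. A nonzero wedge forces the five $F_{4,k}$ to be linearly independent over $\mathrm{Frac}(R)$, hence over $R$, so they span a free submodule of rank~$5$ with the Hilbert series displayed above. As this submodule lies inside $\mathcal{M}_4^{\rm odd}(\Gamma_2,\epsilon)$ and the two share the same Hilbert--Poincar\'e series, they coincide, proving that $\mathcal{M}_4^{\rm odd}(\Gamma_2,\epsilon)$ is free on the five stated generators.
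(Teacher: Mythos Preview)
Your outline is correct and matches the paper's approach exactly: construct five explicit covariants giving cusp forms of the target weights via Theorem~\ref{thm1}, then verify that their wedge is a nonzero element of $S_{75}(\Gamma_2,\epsilon)$ (the paper finds it equals $-2866544640\,\chi_5^{3}\chi_{30}^{2}$, confirming your second predicted shape). The only thing missing is the actual list of covariants and the Fourier computation, which you correctly identify as the sole substantive work.
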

\begin{proof}
We use the covariants
$$
\begin{aligned}
\xi_1&=49\,C_{2,0}^{2}C_{2,4}+45\,C_{2,0}C_{4,4}-375\,C_{2,4}C_{4,0}-225\,C_{3,2}^{2},\\
\xi_2&=772\,C_{2,0}^{3}C_{2,4}-1260\,C_{2,0}^{2}C_{4,4}-4875\,C_{2,0}C_{2,4}C_{4,0}
-900\,C_{2,0}C_{3,2}^{2},\\
& \qquad -5625\,C_{2,4}C_{6,0}+13500\,C_{3,2}C_{5,2}+6750\,C_{4,0}C_{4,4}\\
\xi_3&=64\,C_{2,0}^{4}C_{2,4}-1200\,C_{2,0}^{2}C_{2,4}C_{4,0}-3600\,C_{2,0}^{2}C_{3,2}^{2}
+27000\,C_{2,0
}C_{3,2}C_{5,2}\\
& \qquad +5625\,C_{2,4}C_{4,0}^{2}-50625\,C_{5,2}^{2},\\
\xi_4&=C_{2,4}, \qquad \xi_5= 3\,C_{2,0}C_{2,4}-5\,C_{4,4}\, .\\
\end{aligned}
$$
The Fourier expansions of 
$$
F_{4,9}=-\frac{675}{4} \nu(\xi_1) \chi_5, \qquad 
F_{4,11}=\frac{2025}{8} \nu(\xi_2)\chi_5 \quad \text{and}\quad 
F_{4,13}=-\frac{30375}{8} \nu(\xi_3)\chi_5
$$
all three start as
$$
\left(
\begin{smallmatrix}
u-1/u\\
2(u + 1/u)\\
3(u - 1/u)\\
2(u + 1/u)\\
u-1/u
\end{smallmatrix}
\right)XY+
\ldots
$$
The other two modular forms we need are 
\begin{footnotesize}
\begin{align*}
F_{4,15}=\frac{75}{2} \nu(\xi_4) \chi_5^3&=
\left(
\begin{smallmatrix}
u^3-3u+3/u-1/u^3\\
2(u^3-u-1/u+1/u^3)\\
3(u^3+5u-5/u-1/u^3)\\
2(u^3-u-1/u+1/u^3)\\
u^3-3u+3/u-1/u^3
\end{smallmatrix}
\right)\, X^3\, Y^3+
\ldots\\
F_{4,17}=-\frac{675}{2} \nu(\xi_5) \chi_5^3&=
\left(
\begin{smallmatrix}
u^3+9u-9/u-1/u^3\\
2(u^3-u-1/u+1/u^3)\\
3(u^3-3u+3/u-1/u^3)\\
2(u^3-u-1/u+1/u^3)\\
u^3+9u-9/u-1/u^3
\end{smallmatrix}
\right)\, X^3\, Y^3+
\ldots
\end{align*}
\end{footnotesize}
The Fourier expansion of 
$F_{4,9} \wedge F_{4,11} \wedge F_{4,13} \wedge F_{4,15} \wedge F_{4,17}$ starts with
\[
-2866544640\, 
(u^{5}-u^{3}-2u+2/u+1/u^3-1/u^5) X^9Y^{13}+\ldots
\]
and by a calculation we get 
\[
F_{4,9} \wedge F_{4,11} \wedge F_{4,13} \wedge F_{4,15} \wedge F_{4,17}
=-2866544640\, \chi_{5}^3 \chi_{30}^2 \, .
\]
\end{proof}

\begin{theorem}
The $R$-module $\mathcal{M}_4^{\rm ev}(\Gamma_2,\epsilon)$ is free with generators
of weight $(4,14)$, $(4,16)$, $(4,18)$, $(4,20)$ and $(4,22)$.
\end{theorem}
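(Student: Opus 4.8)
The plan is to run the same machine as in the preceding cases. Reading off the tabulated numerator $N_4=t^{14}+t^{16}+t^{18}+t^{20}+t^{22}$, I expect five free generators, one in each weight $(4,14),(4,16),(4,18),(4,20),(4,22)$. Each generator will be produced from a covariant $\xi_i$ of bi-degree $(a_i,4)$: the meromorphic form $\nu(\xi_i)$ has weight $(4,a_i-2)$ and trivial character, and I multiply by the power $\chi_5^{m_i}$ dictated by Theorem~\ref{thm1}, namely $m_i$ equal to the order of the pole of $\nu(\xi_i)$ along $\mathcal{A}_{1,1}$. The key bookkeeping point, dual to the preceding ($k$ odd) theorem, is that here the $a_i$ must be \emph{odd}: since $\nu(\xi_i)$ has order $2\,\mathrm{ord}_{E_3}(\xi_i)-a_i$ along $\mathcal{A}_{1,1}$, an odd $a_i$ forces this order to be odd, so the minimal holomorphic power $m_i$ is odd, which simultaneously supplies the character $\epsilon$ and makes the resulting weight $k=(a_i-2)+5m_i$ even.

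For the three top weights I expect the pure generators $C_{5,4},C_{7,4},C_{9,4}$ to do the job: if Theorem~\ref{thm1} confirms that each has order $-3$ along $\mathcal{A}_{1,1}$ (equivalently $\mathrm{ord}_{E_3}$ equal to $1,2,3$ respectively), then $\nu(C_{5,4})\chi_5^3$, $\nu(C_{7,4})\chi_5^3$, $\nu(C_{9,4})\chi_5^3$ are holomorphic of weights $(4,18),(4,20),(4,22)$. For the two lowest weights I must instead search, exactly as $\xi_1,\xi_2,\xi_3$ were found in the odd case, for a linear combination of the $x$-degree-$4$ covariants of total degree $11$ (such as $C_{2,0}C_{9,4}$, $C_{4,0}C_{7,4}$, $C_{6,0}C_{5,4}$ and the like) and of total degree $13$ whose order along $\mathcal{A}_{1,1}$ is only $-1$, i.e.\ whose order along $E_3$ is as large as possible; multiplying these by $\chi_5$ then lands them in weights $(4,14)$ and $(4,16)$. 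By the Lemma of Section~5 every $M_{4,k}(\Gamma_2,\epsilon)$ equals $S_{4,k}(\Gamma_2,\epsilon)$, and the dimension data together with \cite{C-vdG-Ch} show there is no cusp form of weight $(4,k)$ with $k<14$, so all five forms are cusp forms and the module genuinely starts in weight $14$.

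It remains to prove independence over $R$, for which I form the wedge
\[
\Xi=F_{4,14}\we F_{4,16}\we F_{4,18}\we F_{4,20}\we F_{4,22}.
\]
Using $\det(\Sym^4(\EE))=\det(\EE)^{10}$, $\Xi$ is a scalar-valued form of weight $10+(14+16+18+20+22)=100$ and character $\epsilon^5=\epsilon$, hence lies in $S_{100}(\Gamma_2,\epsilon)$. I compute the leading terms of its Fourier expansion; by the pattern of the earlier cases (with $\chi_5^3\chi_{30}^2$ in the odd case) I expect to recognise $\Xi$ as a nonzero multiple of $\chi_5^2\chi_{30}^3$, which has the correct weight $10+90=100$ and character. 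Whatever the precise constant, a nonzero $\Xi$ forces the five forms to be linearly independent over the fraction field of $R$, so they span a free $R$-submodule with Hilbert--Poincar\'e series $N_4/\bigl((1-t^4)(1-t^6)(1-t^{10})(1-t^{12})\bigr)$. Since this equals the series of the full module, the submodule is all of $\mathcal{M}_4^{\rm ev}(\Gamma_2,\epsilon)$, which is therefore free on these five generators.

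The conceptual steps are forced by the framework; the labour is computational. The main obstacle is the explicit construction in weights $(4,14)$ and $(4,16)$: one must comb through the many degree-$11$ and degree-$13$ covariants of $x$-degree $4$ to pin down combinations with maximal order of vanishing along $E_3$, then certify via Theorem~\ref{thm1} that the pole order along $\mathcal{A}_{1,1}$ is exactly $-1$. The second, purely mechanical, hurdle is computing enough Fourier coefficients of the weight-$100$ form $\Xi$ to identify it with a multiple of $\chi_5^2\chi_{30}^3$; this is heavy but routine once the generators are fixed.
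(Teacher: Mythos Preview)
Your overall plan matches the paper's exactly: construct five cusp forms from covariants of odd $a$-degree via $\nu$ and a power of $\chi_5$, then verify freeness by showing the wedge is a nonzero multiple of $\chi_5^2\chi_{30}^3\in S_{100}(\Gamma_2,\epsilon)$. That part is fine.

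The gap is in your expectation for weights $(4,20)$ and $(4,22)$. The pure covariants $C_{7,4}$ and $C_{9,4}$ do \emph{not} have $\mathrm{ord}_{E_3}$ equal to $2$ and $3$; a quick check with the family $g=(x^3+ct^2x+c^2t^3)h$ shows that each has $\mathrm{ord}_{E_3}$ only $1$ (respectively $2$), so by Theorem~\ref{thm1} the pole orders of $\nu(C_{7,4})$ and $\nu(C_{9,4})$ along $\mathcal{A}_{1,1}$ are $-5$, not $-3$. Hence $\nu(C_{7,4})\chi_5^3$ and $\nu(C_{9,4})\chi_5^3$ are not holomorphic, and the minimal odd power of $\chi_5$ that clears the pole throws you past the target weight. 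One can also see this indirectly: since $\mathrm{ord}_{E_3}(C_{2,0})=0$ and $\mathrm{ord}_{E_3}(C_{5,4})=1$, the product $C_{2,0}C_{5,4}$ has $\mathrm{ord}_{E_3}=1$, so $\nu(C_{2,0}C_{5,4})\chi_5^3$ has a pole; if $\nu(C_{7,4})\chi_5^3$ were holomorphic, so would be any linear combination with $\nu(C_{2,0}C_{5,4})\chi_5^3$, which it is not.

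So the search for linear combinations with boosted $\mathrm{ord}_{E_3}$ is required in \emph{four} of the five weights, not just the bottom two. In the paper the generators in weights $(4,20)$ and $(4,22)$ come from $\xi_4=C_{2,0}C_{5,4}+70\,C_{7,4}$ and $\xi_5=C_{2,0}^2C_{5,4}-10\,C_{2,0}C_{7,4}+1000\,C_{9,4}$, each tuned so that $\mathrm{ord}_{E_3}=2$ (resp.\ $3$) and the pole is exactly $-3$. Only $C_{5,4}$ works unadorned. Once you correct this, the rest of your argument (and the identification of the wedge with $-20736\,\chi_5^2\chi_{30}^3$) goes through as you describe.
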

\begin{proof}
For weight $(4,14)$ we consider the covariant $\xi_1$ given as
$$
189\,C_{2,0}^{3}C_{5,4}+12390\,C_{2,0}^{2}C_{7,4}
-750\,C_{2,0}C_{4,0}C_{5,4}-63000\,(C_{2,0}C
_{9,4}+C_{3,2}C_{8,2}+C_{4,0}C_{7,4})
$$
and set 
$F_{4,14}=-(151875/1024) \nu(\xi_1)\chi_5$.
This is holomorphic and its Fourier expansion starts with
\[
F_{4,14}(\tau)=
\left(
\begin{smallmatrix}
0\\
0\\
0\\
2(u-1/u)\\
(u+1/u)
\end{smallmatrix}
\right)XY^3-
\left(
\begin{smallmatrix}
(u+1/u)\\
2(u-1/u)\\
0\\
0\\
0
\end{smallmatrix}
\right)X^3Y+
\ldots
\]
For weight $(4,16)$ we consider the covariant $\xi_2$ given as
$$
\begin{aligned}
&
11176\,C_{2,0}^{4}C_{5,4}-82320\,C_{2,0}^{3}C_{7,4}+9576000\,C_{2,0}^{2}C_{9,4}
-15750\,C_{2,0}C_{3,2}C_{8,2}\\ 
& -220500\,C_{2,0}C_{4,0}C_{7,4}
-176625\,C_{2,0}C_{5,4}C_{6,0}-414000\,C_{4,0}^{2}C_{5,4}+43213500\,C_{3,2}C_{10,2} \\
&
-47250000\,C_{4,0}C_{9,4}
+20506500\,C_{5,2}C_{8,2}-9308250\,C_{6,0}C_{7,4} \\
\end{aligned}
$$
and set $F_{4,16}=(151875/4096) \nu(\xi_2)\chi_5$; it is holomorphic and its 
Fourier expansion starts with
\[
F_{4,16}(\tau)=
\left(
\begin{smallmatrix}
0\\
2(u+1/u)\\
3(u+1/u)\\
(u-1/u)\\
0
\end{smallmatrix}
\right)XY^3+
\ldots
\]
We get a form $F_{4,18}$ of weight $(4,18)$ by putting 
$F_{4,18}=(16875/8) \nu(C_{5,4})\chi_5^3$; it is holomorphic and its Fourier expansion starts with 
\[
F_{4,18}(\tau)=
\left(
\begin{smallmatrix}
3(u+1/u)\\
2(u-1/u)\\
0\\
-2(u-1/u)\\
-3(u+1/u)
\end{smallmatrix}
\right)X^3Y^3+
\ldots
\]
For weight $(4,20)$ we consider the covariant $\xi_4= C_{2,0}C_{5,4}+70 C_{7,4}$ and 
put $F_{4,20}=(151875/32) \nu(\xi_4)\chi_5^3$ with Fourier expansion
\[
F_{4,20}(\tau)=
\left(
\begin{smallmatrix}
0\\
(u-1/u)\\
0\\
-(u-1/u)\\
0
\end{smallmatrix}
\right)X^3Y^3+
\ldots
\]
Finally, the covariant $\xi_5=C_{2,0}^{2}C_{5,4}-10\,C_{2,0}C_{7,4}+1000\,C_{9,4}$
yields the form $F_{4,22}=(3189375/32)\nu(\xi_5) \chi_5^3$ with Fourier expansion
\[
F_{4,22}(\tau)=
\left(
\begin{smallmatrix}
(u+1/u)\\
2(u-1/u)\\
0\\
-2(u-1/u)\\
-(u+1/u)
\end{smallmatrix}
\right)X^3Y^3+
\ldots
\]
The Fourier expansion of 
$F_{4,14} \wedge F_{4,16} \wedge F_{4,18} \wedge F_{4,20} \wedge F_{4,22}$ 
starts with
\[
-20736\, 
(u^{5}+u^{3}-2u-2/u+1/u^3+1/u^5) X^{11}Y^{17}+\ldots
\]
and in fact we checked that it equals $-20736\, \chi_5^2 \chi_{30}^3$.
\end{proof}
\end{section}
\begin{section}{The case $j=6$}
\begin{theorem}\label{thmj=6}
The $R$-module $\mathcal{M}_6^{\rm odd}(\Gamma_2,\epsilon)$ 
is free with generators of weight
$(6,3)$, $(6,5)$, $(6,11)$, $(6,13)$, $(6,17)$, $(6,19)$ and $(6,21)$.
\end{theorem}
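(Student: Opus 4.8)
The plan is to follow the template used for $j=2$ and $j=4$: exhibit the seven generators predicted by the numerator
\[
N_6(t)=t^{3}+t^{5}+t^{11}+t^{13}+t^{17}+t^{19}+t^{21}
\]
of the Hilbert--Poincar\'e series and then verify that their wedge product is not identically zero. Since each coefficient of $N_6$ equals $1$ and they sum to $j+1=7$, the expected outcome is a free module on seven cusp forms of weights $(6,3),(6,5),(6,11),(6,13),(6,17),(6,19),(6,21)$. First I would record that the module genuinely starts in weight $3$, i.e.\ that $S_{6,1}(\Gamma_2,\epsilon)=(0)$; this is the input ``no cusp form below the lowest listed weight'' and follows from the dimension data of \cite{BFvdG} together with the vanishing results of \cite{C-vdG-Ch}.

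Next I would build the seven forms from covariants. The weight-$(6,3)$ generator is the fundamental form itself: $\nu(C_{1,6})=\nu(f)=\chi_{6,-2}$ has a simple pole along $\mathcal{A}_{1,1}$ by Theorem~\ref{thm1} (here $a=1$ and $f$ does not vanish on $E_3$), so $F_{6,3}=\nu(f)\,\chi_5=\chi_{6,3}$ is holomorphic of weight $(6,3)$. For the six higher weights I would look among the covariants $C_{a,6}$ and their products with the scalar covariants $C_{2,0},C_{4,0},C_{6,0},\dots$ for linear combinations $\xi_i$ whose order along $E_3$ is as small as possible; setting $F_{6,k_i}=\nu(\xi_i)\,\chi_5^{s_i}$ with $s_i=-{\rm ord}_{\mathcal{A}_{1,1}}(\nu(\xi_i))$ (computed from Theorem~\ref{thm1}, using that $\chi_5$ vanishes to order $1$ along $\mathcal{A}_{1,1}$) then produces holomorphic forms of the required weights $(6,5),(6,11),(6,13),(6,17),(6,19),(6,21)$. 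One adjusts the scalar coefficients so that the leading Fourier coefficients are nonzero.

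I would then compute the wedge product $F_{6,3}\wedge F_{6,5}\wedge\cdots\wedge F_{6,21}$. Using $\det({\rm Sym}^6({\EE}))=\det({\EE})^{21}$ and $\sum_i k_i=89$, this is a scalar-valued cusp form of weight $21+89=110$ and character $\epsilon^{7}=\epsilon$, so it lies in $S_{110}(\Gamma_2,\epsilon)$. Extrapolating the pattern $\chi_5^{m+1}\chi_{30}^{m}$ seen for $j=2m$ in odd weight, I expect
\[
F_{6,3}\wedge F_{6,5}\wedge\cdots\wedge F_{6,21}=c\,\chi_5^{4}\,\chi_{30}^{3}
\]
with $c\neq0$ (note $4\cdot5+3\cdot30=110$), which I would confirm by matching the first Fourier coefficient. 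Non-vanishing of the wedge makes the seven forms $R$-independent, so they span a free rank-$7$ submodule; its Hilbert--Poincar\'e series is $N_6/\bigl((1-t^4)(1-t^6)(1-t^{10})(1-t^{12})\bigr)$, equal to that of the whole module, whence the submodule is everything and the module is free as claimed.

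The main obstacle I anticipate is the construction of the two low-weight generators $F_{6,3}$ and $F_{6,5}$ --- most covariant combinations land in higher weight, so these force the use of $f=C_{1,6}$ itself and of carefully chosen low-degree covariants --- together with the identification of the wedge product. Matching the leading Fourier coefficient gives non-vanishing, but to confirm the predicted $\chi_{30}$-content a weight count alone is insufficient, since $S_{110}(\Gamma_2,\epsilon)$ is large. The tool prepared for exactly this purpose is the restriction of $\chi_{6,3}$ to the Humbert surface $H_4$ computed in the Lemma above: because $\chi_{30}$ cuts out $H_4$, restricting the seven forms to $H_4$ and tracking orders of vanishing there lets me identify $F_{6,3}$ with $\chi_{6,3}$ and fix the multiplicity of $\chi_{30}$, so that the wedge product is pinned down as a nonzero multiple of $\chi_5^{4}\chi_{30}^{3}$.
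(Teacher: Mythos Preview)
Your proposal is correct and is essentially the paper's own proof: one takes $\xi_1=C_{1,6}$, $\xi_2=8C_{1,6}C_{2,0}-75C_{3,6}$, \ldots, $\xi_7$, sets $F_{6,k_i}=c_i\,\nu(\xi_i)\chi_5^{s_i}$ with $s_i\in\{1,3\}$ determined by Theorem~\ref{thm1}, checks from the leading Fourier term that the wedge $W_{110}\in S_{110}(\Gamma_2,\epsilon)$ is nonzero, and then identifies $W_{110}=c\,\chi_5^{4}\chi_{30}^{3}$ via its orders $4$ and $3$ along $H_1$ and $H_4$. One small refinement: the search space you describe (a single $C_{a,6}$ times scalar invariants) is too narrow---the actual $\xi_3,\ldots,\xi_7$ are linear combinations of arbitrary products of generating covariants whose second bi-degrees sum to~$6$ (e.g.\ $C_{3,2}^3$, $C_{2,4}C_{7,2}$, $C_{2,0}C_{3,2}C_{4,4}$), and only with these extra monomials can one arrange enough cancellation along $E_3$ to hit the required pole orders $-1$ or $-3$.
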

\begin{proof}
We use the covariants
$$
\begin{aligned}
\xi_1=&C_{1,6},\qquad
\xi_2=8\,C_{1,6}C_{2,0}-75\,C_{3,6},\\
\xi_3=&125\,C_{1,6}C_{2,0}^{2}C_{4,0}+249\,C_{1,6}C_{2,0}C_{6,0}-840\,C_{1,6}C_{4,0}^{2}-189\,C_{2,0}
C_{2,4}C_{5,2}\\
&-1008\,C_{2,0}C_{3,2}C_{4,4}-72\,C_{2,0}C_{3,6}C_{4,0}+630\,C_{3,2}^{3}+132300\,C_{2
,4}C_{7,2}\\
&+2430\,C_{3,6}C_{6,0}-1890\,C_{4,4}C_{5,2},\\
\xi_4=&768\,C_{1,6}C_{2,0}^{5}+768\,C_{2,0}^{4}C_{3,6}-487520\,C_{1,6}C_{2,0}^{2}C_{6,0}-36075\,C_{2,0}^{2}
C_{2,4}C_{5,2}\\
&+33600\,C_{2,0}^{2}C_{3,2}C_{4,4}-52500\,C_{2,0}C_{3,2}^{3}-11061300\,C_{1,6}C_{4,0}
C_{6,0}\\
&-314861750\,C_{2,0}C_{2,4}C_{7,2}-112500\,C_{2,0}C_{3,6}C_{6,0}+8956675\,C_{2,0}C_{4,4}C_{5,2
}\\
&+17767100\,C_{2,4}C_{3,2}C_{6,0}+230625\,C_{2,4}C_{4,0}C_{5,2}-39779100\,C_{3,2}^{2}C_{5,2}\\
&+17834600\,C_{3,2}C_{4,0}C_{4,4}+9482503800\,C_{1,6}C_{10,0}-932772750\,C_{4,4}C_{7,2},\\
\xi_5=&8\,C_{1,6}C_{2,0}^{2}-125\,C_{2,4}C_{3,2},\\
\xi_6=&128\,C_{1,6}C_{2,0}^{3}+6600\,C_{2,0}^{2}C_{3,6}+6750\,C_{2,4}C_{5,2}-9000\,C_{3,2}C_{4,4}-52875\,{
\it Cov}_{3,6}C_{4,0},\\
\xi_7=&
-837\,C_{1,6}C_{2,0}^{2}C_{4,0}+415\,C_{1,6}C_{2,0}C_{6,0}+9450\,C_{2,0}C_{2,4}C_{5,2}
+6075\,C_{2,0}C_{3,6}C_{4,0}\\
&+3150\,C_{3,2}^{3}
-1543500\,C_{2,4}C_{7,2}-17475\,C_{3,6}C_{6,0}+14175\,C_{4,4}C_{5,2}\,.
\end{aligned}
$$
We consider the following cusp forms:
$$
F_{6,3}=\nu(\xi_1)\chi_5,
\,\,
F_{6,5}=-15\, \nu(\xi_2)\chi_5,
\,\,
F_{6,11}=\frac{253125}{8}\nu(\xi_3)\chi_5,
\,\,
F_{6,13}=\frac{2278125}{16}\nu(\xi_4)\chi_5,
$$
and
$$
F_{6,17}=-\frac{675}{4}\nu(\xi_5)\chi_5^3,
\quad
F_{6,19}=-\frac{675}{2}\nu(\xi_6)\chi_5^3,
\quad
F_{6,21}=-\frac{151875}{4}\nu(\xi_7)\chi_5^3\,.
$$
Then  
$$
W_{110}=F_{6,3} \wedge F_{6,5} \wedge F_{6,11} \wedge F_{6,13}
\wedge F_{6,17} \wedge F_{6,19} \wedge F_{6,21}
$$ is a cusp form in $S_{0,110}(\Gamma_2,\epsilon)$ 
and its Fourier expansion starts with
$$
2^{30}\cdot 3^5\cdot 5^8\cdot 7^3\, 
(u^{7}-u^{5}-3u^3+3u+3/u-3/u^3-1/u^5+1/u^7) \, X^{13}Y^{17}+\ldots
$$
The order of vanishing of 
$W_{110}$ along $H_1$ is $4$ while along
$H_4$ it is $3$, so $W_{110}$ is a multiple of $\chi_5^4 \chi_{30}^3$ 
and a calculation at the level of covariants yields 
$W_{110}=2^{30}\cdot 3^5\cdot 5^8\cdot 7^3\,\chi_5^4\chi_{30}^3$~.
\end{proof}

\begin{theorem}
The $R$-module $\mathcal{M}_6^{\rm ev}(\Gamma_2,\epsilon)$ is free with generators of weight
$(6,8)$, $(6,10)$, $(6,12)$, $(6,16)$, $(6,18)$, $(6,24)$ and $(6,26)$.
\end{theorem}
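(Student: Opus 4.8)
The plan is to follow the strategy used in the preceding proofs. The table in Section~5 gives the numerator
$N_6(t)=t^{8}+t^{10}+t^{12}+t^{16}+t^{18}+t^{24}+t^{26}$
of the Hilbert--Poincar\'e series of $\mathcal{M}_6^{\rm ev}(\Gamma_2,\epsilon)$; its coefficients are all $1$ and sum to $j+1=7$, which predicts seven free generators, one in each of the weights $(6,8),(6,10),(6,12),(6,16),(6,18),(6,24)$ and $(6,26)$. By the lemma that $M_{j,k}(\Gamma_2,\epsilon)=S_{j,k}(\Gamma_2,\epsilon)$ for $(j,k)\neq(0,0)$, all these spaces consist of cusp forms.

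First I would exhibit seven covariants $\xi_1,\dots,\xi_7$ of binary sextics, each of $x$-degree $6$, written as explicit integral linear combinations of products of the generators $C_{a,b}$ of Section~2, chosen so that the meromorphic form $\nu(\xi_i)$, after multiplication by a suitable power of $\chi_5$, lands in the required weight $(6,k)$. For each $\xi_i$ I would compute $\mathrm{ord}_{E_3}(\xi_i)$ and apply Theorem~\ref{thm1} to read off the order of $\nu(\xi_i)$ along $\mathcal{A}_{1,1}$; multiplying by the appropriate power of $\chi_5$ then produces holomorphic forms $F_{6,8},\dots,F_{6,26}$ of the prescribed weights. Computing the leading terms of their Fourier expansions confirms that each $F_{6,k}$ is nonzero and lies in $S_{6,k}(\Gamma_2,\epsilon)$.

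The heart of the matter is to show that these seven forms are $R$-linearly independent, and I would do this by verifying that their wedge product
$$
W=F_{6,8}\wedge F_{6,10}\wedge F_{6,12}\wedge F_{6,16}\wedge F_{6,18}\wedge F_{6,24}\wedge F_{6,26}
$$
is not identically zero. Since $\det(\mathrm{Sym}^6(\EE))=\det(\EE)^{21}$ and $8+10+12+16+18+24+26=114$, the form $W$ lies in $S_{0,135}(\Gamma_2,\epsilon)$. Determining its orders of vanishing along the Humbert surfaces $H_1$ and $H_4$---the former by applying Theorem~\ref{thm1} to the product covariant, the latter by restricting to $H_4$ using the computation of $\chi_{6,3}|_{H_4}$ in Section~6---should give the orders $3$ and $4$ respectively. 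Because the zero loci of $\chi_5$ and $\chi_{30}$ in $\mathcal{A}_2$ are exactly $H_1$ and $H_4$ and $5\cdot 3+30\cdot 4=135$, it follows that $W=c\,\chi_5^{3}\chi_{30}^{4}$ for some constant $c$, and a single nonzero Fourier coefficient shows that $c\neq 0$; hence $W\neq 0$.

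Finally, the nonvanishing of $W$ means that $F_{6,8},\dots,F_{6,26}$ are linearly independent over the fraction field of $R$, so they generate a free $R$-submodule of $\mathcal{M}_6^{\rm ev}(\Gamma_2,\epsilon)$ isomorphic to $\bigoplus_i R(-k_i)$, whose Hilbert--Poincar\'e series is $N_6(t)/\bigl((1-t^4)(1-t^6)(1-t^{10})(1-t^{12})\bigr)$. By the table this equals the Hilbert--Poincar\'e series of $\mathcal{M}_6^{\rm ev}(\Gamma_2,\epsilon)$ itself, and an inclusion of graded modules with identical Hilbert--Poincar\'e series is an isomorphism in each weight. Therefore the seven forms generate the whole module, which is thus free on them. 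I expect the only genuine obstacle to be the first step: finding covariants whose images are holomorphic of exactly the required weights and whose wedge product does not vanish; once suitable $\xi_i$ are fixed, the remaining steps are lengthy but routine computations with transvectants and Fourier expansions.
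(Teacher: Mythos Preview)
Your proposal is correct and follows essentially the same approach as the paper's own proof: exhibit seven explicit covariants, use Theorem~\ref{thm1} to determine the correct power of $\chi_5$ needed to make each $\nu(\xi_i)$ holomorphic, then verify that the wedge product $W\in S_{135}(\Gamma_2,\epsilon)$ is nonzero by computing its orders along $H_1$ and $H_4$ (finding $3$ and $4$) and identifying it as a nonzero constant times $\chi_5^{3}\chi_{30}^{4}$, after which the Hilbert--Poincar\'e comparison finishes the argument. Your remark that the only real work lies in actually producing suitable $\xi_i$ is exactly right; the paper supplies these explicitly, and once they are in hand everything proceeds just as you outline.
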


\begin{proof}
We use the covariants
$$
\begin{aligned}
\xi_1&=16\,C_{2,0}C_{4,6}+75\,C_{6,6}^{(1)}-60\,C_{6,6}^{(2)},\quad
\xi_4=C_{4,6},\quad
\xi_5=4\, C_{2,0}C_{4,6}-15\,C_{6,6}^{(1)},\\
\xi_2&=-128\,C_{2,0}^{2}C_{4,6}+75\,C_{2,0}C_{6,6}^{(1)}-540\,C_{2,0}C_{6,6}^{(2)}-1500\,C_{3,2}C_{5,4}+1800\,C_{4
,0}C_{4,6},\\
\xi_3&=64\,C_{2,0}^{3}C_{4,6}-3975\,C_{2,0}^{2}C_{6,6}^{(1)}+1740\,C_{2,0}^{2}C_{6,6}^{(2)}-189000\,C_{2,4}C_{8,2}+
63000\,C_{3,2}C_{7,4}\\
&+40500\,C_{4,0}C_{6,6}^{(1)}-18000\,C_{4,0}C_{6,6}^{(2)}+4500\,C_{5,2}C_{5,4},\\
\xi_6&=-17472\,C_{2,0}C_{2,4}C_{8,2}+31360\,C_{2,0}C_{3,2}C_{7,4}-513\,C_{2,0}C_{4,0}C_{6,6}^{(1)}
+180\,C_{2,0}C_{4,0}C_{6,6}^{(2)}\\
&-64\,C_{2,0}C_{4,6}C_{6,0}+342\,C_{2,0}C_{5,2}C_{5,4}+39600\,C_{2,
4}C_{10,2}-126000\,C_{3,2}C_{9,4}\\
&-16800\,C_{4,4}C_{8,2}
-60900\,C_{5,2}C_{7,4}+600\,C_{6,0}C^{(1)}_{6,
6},\\
\xi_7&=
1024\,C_{2,0}^{5}C_{4,6}
-257152000\,C_{2,0}^{2}C_{3,2}C_{7,4}
+5375048250\,C_{2,0}C_{2,4}C_{10,2}\\
&-1808283750\,C_{2,0}C_{3,2}C_{9,4}
+785335250\,C_{2,0}C_{4,4}C_{8,2}
+1144763375\,C_{2,0}C_{5,2}C_{7,4}\\
&+673186500\,C_{2,4}C_{4,0}C_{8,2}
+656687500\,C_{3,2}^{2}C_{8,2}
-938905625\,C_{3,2}C_{4,0}C_{7,4}\\
&+3150000\,C_{4,0}^{2}C_{6,6}^{(2)}
+17435250\,C_{4,0}C_{5,2}C_{5,4}
-378064302000\,C_{2,4}C_{12,2}\\
&-532125000\,C_{4,4}C_{10,2}
-415800000\,C_{4,6}C_{10,0}
+37292797500\,C_{5,2}C_{9,4}\\
&-250254270000\,C_{7,2}C_{7,4}\,.
\end{aligned}
$$
We consider the following cusp forms:
$$
F_{6,8}=\frac{10125}{8}\nu(\xi_1)\chi_5,
\quad
F_{6,10}=-\frac{30375}{16}\nu(\xi_2)\chi_5,
\quad
F_{6,12}=\frac{455625}{64}\nu(\xi_3)\chi_5,
$$
$$
F_{6,16}=-3375\, \nu(\xi_4)\chi_5^3,
\quad
F_{6,18}=-50625\,\nu(\xi_5)\chi_5^3
\quad
F_{6,24}=-\frac{170859375}{32}\nu(\xi_6)\chi_5^3,
$$
$$
F_{6,26}=-\frac{20503125}{16}\nu(\xi_7)\chi_5^3\,.
$$
Then
\[
W_{135}=F_{6,8} \wedge F_{6,10} \wedge F_{6,12} \wedge F_{6,16} \wedge F_{6,18} \wedge F_{6,24} \wedge F_{6,26} 
\]
is a cusp form in $S_{135}(\Gamma_2,\epsilon)$ and its Fourier expansion starts with
\[
-2^{32}\cdot 3^{8}\cdot 5^8\cdot 7^2 \cdot 13 \cdot 23\, 
(u^{7}+u^{5}-3u^3-3u+3/u+3/u^3-1/u^5-1/u^7) X^{15}Y^{23}+\ldots
\]
A calculation shows that the order of vanishing of  $W_{135}$ along $H_1$ is $3$, 
while along $H_4$ it is $4$, so $W_{135}$ is a multiple of $\chi_5^3\chi_{30}^4$
and a calculation at the level of covariants tells us
\[
W_{135}=-2^{32}\cdot 3^{8}\cdot 5^8\cdot 7^2 \cdot 13 \cdot 23\, \chi_5^3\chi_{30}^4\, .
\]
\end{proof}

\end{section}
\begin{section}{The case $j=8$}

\begin{theorem}
The $R$-module $\mathcal{M}_8^{\rm odd}(\Gamma_2,\epsilon)$ 
is free with generators of weight
$(8,5)$, $(8,7)$, $(8,9)$, $(8,9)$, $(8,11)$, $(8,13)$, $(8,15)$, $(8,17)$ and $(8,23)$.
\end{theorem}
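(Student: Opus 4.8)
The plan is to follow the template of the preceding odd cases $j=2,4,6$. From the table the numerator for $j=8$ with $k$ odd is
$$
N_8(t)=t^5+t^7+2\,t^9+t^{11}+t^{13}+t^{15}+t^{17}+t^{23},
$$
so $N_8(1)=9=j+1$ and the prospective basis consists of nine cusp forms, of weights $(8,5)$, $(8,7)$, two of weight $(8,9)$, and one each of weights $(8,11)$, $(8,13)$, $(8,15)$, $(8,17)$ and $(8,23)$. By the work of Tsushima as completed by Bergstr\"om (see \cite{BFvdG}) the Hilbert--Poincar\'e series of $\mathcal{M}_8^{\rm odd}(\Gamma_2,\epsilon)$ equals $N_8(t)/\big((1-t^4)(1-t^6)(1-t^{10})(1-t^{12})\big)$; in particular $S_{8,k}(\Gamma_2,\epsilon)=(0)$ for odd $k<5$, so the module begins in weight $5$. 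It therefore suffices to exhibit nine holomorphic forms of these weights whose wedge product is not identically zero: the free submodule they generate then has the same Hilbert--Poincar\'e series as the whole module, forcing equality and hence freeness.

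To construct the forms I would, for each target weight $(8,k_i)$, choose a covariant $\xi_i$ of bidegree $(a_i,8)$ (the second index matching $j=8$) and set $F_{8,k_i}=\nu(\xi_i)\,\chi_5^{m_i}$. The exponent $m_i$ is fixed by Theorem~\ref{thm1}: restricting $\xi_i$ to the family $g=(x^3+vx+u)h$ and substituting $u=c^2t^3$, $v=ct^2$ yields $\mathrm{ord}_{E_3}(\xi_i)$, whence $\mathrm{ord}_{\mathcal{A}_{1,1}}(\nu(\xi_i))=2\,\mathrm{ord}_{E_3}(\xi_i)-a_i$, and I take $m_i=a_i-2\,\mathrm{ord}_{E_3}(\xi_i)$ so that $F_{8,k_i}$ is holomorphic. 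Keeping $m_i$ small for the low weights forces the covariants to vanish on $E_3$, while the weight $(8,23)$ form needs a covariant of comparatively large degree $a_i$. Reading off leading Fourier coefficients then confirms the weight and character $\epsilon$ of each $F_{8,k_i}$, that each is a cusp form (as it must be by the Lemma, since $j\neq0$), and, crucially, that the two forms $F_{8,9}^{(1)},F_{8,9}^{(2)}$ of weight $(8,9)$ are linearly independent.

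Finally I would form
$$
W=F_{8,5}\wedge F_{8,7}\wedge F_{8,9}^{(1)}\wedge F_{8,9}^{(2)}\wedge F_{8,11}\wedge F_{8,13}\wedge F_{8,15}\wedge F_{8,17}\wedge F_{8,23}.
$$
Since $\det(\Sym^8(\EE))=\det(\EE)^{36}$ and the nine scalar weights sum to $109$, $W$ lies in $S_{145}(\Gamma_2,\epsilon)$. I expect its first Fourier coefficients to be nonzero; computing the orders of vanishing of $W$ along $H_1$ and $H_4$ should give $5$ and $4$, extrapolating the pattern of $j=2,4,6$, where the wedge is a nonzero multiple of $\chi_5^2\chi_{30}$, $\chi_5^3\chi_{30}^2$ and $\chi_5^4\chi_{30}^3$ respectively. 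These orders force $\chi_5^5\chi_{30}^4\mid W$, and as the quotient is a holomorphic scalar form of weight $0$ with trivial character, hence a constant, one obtains $W=c\,\chi_5^5\chi_{30}^4$ with $c\neq0$. The non-vanishing of $W$ completes the Hilbert--Poincar\'e series comparison and proves the theorem.

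The main obstacle is the explicit construction of the covariants, in particular the two producing \emph{independent} forms of weight $(8,9)$ and the one of weight $(8,23)$, together with the sizeable symbolic computation verifying that the nine-fold wedge equals the predicted nonzero multiple of $\chi_5^5\chi_{30}^4$ rather than collapsing to zero; once the forms are in hand the holomorphy checks via Theorem~\ref{thm1} and the series comparison are routine.
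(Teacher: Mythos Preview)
Your proposal is correct and follows essentially the same approach as the paper: construct nine holomorphic cusp forms from covariants via $\nu$ and multiplication by $\chi_5$ (with exponents $1$ for the five lowest weights and $3$ for $(8,15),(8,17),(8,23)$, coming from degrees $a_i=4,6,8,8,10,12,4,6,12$), take their wedge, and identify it with a nonzero multiple of $\chi_5^5\chi_{30}^4$ to conclude via the Hilbert--Poincar\'e series. The paper's proof is exactly your outline with the explicit covariants written out and the leading Fourier term of $W_{145}$ computed to verify non-vanishing; your prediction of the vanishing orders $5$ along $H_1$ and $4$ along $H_4$ is on the nose.
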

\begin{proof}
We use the covariants
$$
\begin{tiny}
\begin{aligned}
\xi_1=&160\,C_{1,6}C_{3,2}-208\,C_{2,0}C_{2,8}+250\,C_{2,4}^{2},\\
\xi_2=&60\,C_{1,6}C_{2,0}C_{3,2}+16\,C_{2,0}^{2}C_{2,8}-225\,C_{1,6}C_{5,2}-150\,C_{2,8}C_{4,0},\\
\xi_3^{(1)}=&4032\,C_{2,0}^{3}C_{2,8}+55800\,C_{1,6}C_{2,0}C_{5,2}-25000\,C_{1,6}C_{3,2}C_{4,0}-46125\,C_{2,0}
C_{2,4}C_{4,4},\\
&-159500\,C_{2,0}C_{3,2}C_{3,6}+17377500\,C_{1,6}C_{7,2}+90750\,C_{2,8}C_{6,0}+
675000\,C_{3,6}C_{5,2}-384375\,C_{4,4}^{2}\\
\xi_3^{(2)}=&112\,C_{1,6}C_{2,0}^{2}C_{3,2}-60\,C_{1,6}C_{2,0}C_{5,2}-150\,C_{1,6}C_{3,2}C_{4,0}-135\,
C_{2,0}C_{2,4}C_{4,4}-1440\,C_{2,0}C_{3,2}C_{3,6}\\
&+31500\,C_{1,6}C_{7,2}+450\,C_{2,8}C_{6,0}+5625\,C_{3,6}C_{5,2}-1125\,C_{4,4}^{2},\\
\xi_4=&1792\,C_{2,0}^{4}C_{2,8}+28750\,C_{1,6}C_{2,0}^{2}C_{5,2}-3685500\,C_{1,6}C_{2,0}C_{7,2}
-139200\,C_{1,6}C_{3,2}C_{6,0}\\
&-229650\,C_{1,6}C_{4,0}C_{5,2}-93600\,C_{2,0}C_{2,8}C_{6,0}-183150\,C_{2,0
}C_{3,6}C_{5,2}+166725\,C_{2,4}^{2}C_{6,0}\\
&-40500\,C_{2,4}C_{3,2}C_{5,2}-16875\,C_{2,4}C_{4,0}
C_{4,4}-72450\,C_{2,8}C_{4,0}^{2}+317700\,C_{3,2}^{2}C_{4,4}\\
&+256500\,C_{3,2}C_{3,6}C_{4,0}+38650500\,
C_{3,6}C_{7,2}+246600\,C_{5,4}^{2},\\
\xi_5=&807424\,C_{2,0}^{5}C_{2,8}-6707400000\,C_{1,6}C_{2,0}^{2}C_{7,2}
-1888920000\,C_{1,6}C_{2,0}C_{3,2}
C_{6,0}\\
&-785694375\,C_{1,6}C_{2,0}C_{4,0}C_{5,2}-278572500\,C_{1,6}C_{3,2}C_{4,0}^{2}
-120600000\,C_{2,0}^{2}C_{4,4}^{2}\\
&-42918750\,C_{2,0}C_{2,8}C_{4,0}^{2}+5193090000\,C_{2,0}C_{3,2}^{2}C_{4,4}
-271446918750\,C_{1,6}C_{4,0}C_{7,2}\\
&-5117321250\,C_{1,6}C_{5,2}C_{6,0}+338190300000\,C_{2,0}C_{3,6}C_{7,2}
+1145700000\,C_{2,0}C_{5,4}^{2}\\
&+62962200000\,C_{2,4}C_{3,2}C_{7,2}-450720000\,C_{2,4}C_{4,4}C_{6,0}
-1831612500\,C_{2,4}C_{5,2}^{2}\\
&+4053206250\,C_{2,8}C_{4,0}C_{6,0}-12202200000\,C_{3,2}C_{3,6}C_{6,0}
+20030895000\,C_{3,2}C_{4,4}C_{5,2}\\
&+6489787500\,C_{3,6}C_{4,0}C_{5,2}-8640074520000\,C_{2,8}C_{10,0}
-245226240000\,C_{4,6}C_{8,2}\\
&+170775360000\,C_{5,4}C_{7,4},\\
\xi_6=&8\,C_{2,0}C_{2,8}-25\,C_{2,4}^{2},\qquad
\xi_7=48\,C_{2,0}^{2}C_{2,8}-475\,C_{1,6}C_{5,2}+625\,C_{3,2}C_{3,6},\\
\xi_8=&2588867072\,C_{2,0}^{5}C_{2,8}-2215180800000\,C_{1,6}C_{2,0}^{2}C_{7,2}
+13431825000\,C_{1,6}C_{2,0}C_{4,0}C_{5,2}\\
&-97632787500\,C_{2,0}C_{2,8}C_{4,0}^{2}-125273250000\,C_{2,0}C_{3,2}^{2}C_{4,4}
+1345443750000\,C_{1,6}C_{4,0}C_{7,2}\\
&+7597800000000\,C_{2,0}C_{3,6}C_{7,2}+95399876250000\,C_{2,4}C_{3,2}C_{7,2}
-968719500000\,C_{2,4}C_{4,4}C_{6,0}\\
&-248030859375\,C_{2,4}C_{5,2}^{2}-178311712500\,C_{2,8}C_{4,0}C_{6,0}+
1077259500000\,C_{3,2}C_{4,4}C_{5,2}\\
&-143877610800000\,C_{2,8}C_{10,0}-5470416000000\,C_{4,6}C_{8,2}-25300674000000\,C_{5,4}C_{7,4}\,.
\end{aligned}
\end{tiny}
$$
We consider the following cusp forms:
\[
F_{8,5}=\frac{135}{8}\nu(\xi_1)\chi_5,
\quad
F_{8,7}=-\frac{405}{4}\nu(\xi_2)\chi_5,
\]
\[
F_{8,9}^{(1)}=\frac{675}{16}\nu(\xi_3^{(1)})\chi_5,
\quad
F_{8,9}^{(2)}=\frac{10125}{4}\nu(\xi_3^{(2)})\chi_5,
\]
\[
F_{8,11}=\frac{18225}{16}\nu(\xi_4)\chi_5
\quad
F_{8,13}=\frac{54675}{16}\nu(\xi_5)\chi_5,
\quad
F_{8,15}=-\frac{675}{4}\nu(\xi_6)\chi_5^3,
\]
\[
F_{8,17}=\frac{2025}{2}\nu(\xi_7)\chi_5^3,
\quad
F_{8,23}=-\frac{382725}{32}\nu(\xi_8)\chi_5^3\,.
\]

The Fourier expansion of
$$
W_{145}=F_{8,5} \wedge F_{8,7} \wedge F_{8,9}^{(1)} \wedge F_{8,9}^{(2)}
\wedge F_{8,11} \wedge F_{8,13} \wedge F_{8,15} \wedge F_{8,17} \wedge F_{8,23}
$$ 
starts with
$$
c \, (u^9-u^{7}-4u^{5}+4u^3+6u-6/u-4/u^3+4/u^5+1/u^7-1/u^9) X^{17}Y^{25}+\ldots
$$
with $c=-2^{17}\cdot 3^{10}\cdot 5^3\cdot 7 \cdot 59 \cdot 67 \cdot 103 \cdot 429$.
The order of vanishing of $W_{145}$ 
along $H_1$ is $5$, while along $H_4$ it is $4$, so $W_{145}$ is a multiple of 
$\chi_5^5\chi_{30}^4$ and a computation at the level of covariants gives
$$
W_{145}=
-2^{17}\cdot 3^{10}\cdot 5^3\cdot 7 \cdot 59 \cdot 67 \cdot 103 \cdot 429\,\chi_5^5\chi_{30}^4\,.
$$

\end{proof}

\begin{theorem}
The $R$-module $\mathcal{M}_8^{\rm ev}(\Gamma_2,\epsilon)$ 
is free with generators of weight
$(8,4)$, $(8,10)$, $(8,12)$, $(8,14)$, $(8,16)$,$(8,18)$, $(8,18)$, $(8,20)$ and $(8,22)$.
\end{theorem}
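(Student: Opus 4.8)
The plan is to argue exactly as in the preceding cases, in particular as in the $j=8$ odd case just treated. From the table the numerator of the Hilbert--Poincar\'e series is
$$
N_8^{\rm ev}(t)=t^{4}+t^{10}+t^{12}+t^{14}+t^{16}+2\,t^{18}+t^{20}+t^{22},
$$
whose coefficients sum to $9=j+1$; this predicts nine free generators of the stated weights, two of them of weight $(8,18)$. First I would produce, for each prescribed weight $(8,k)$, a holomorphic cusp form $\nu(\xi)\,\chi_5^{m}$, where $\xi$ is a bihomogeneous covariant of bidegree $(a,8)$ (so that $\nu(\xi)$ has weight $(8,a-4)$ by the recipe of Section~4) and $m$ is the power of $\chi_5$ needed to clear the pole of $\nu(\xi)$ along $\mathcal{A}_{1,1}$. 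Since $\chi_5^{a}\nu(\xi)$ carries the character $\epsilon$ exactly when $a$ is odd (Section~4), the form $\nu(\xi)$ is itself character-free and $\nu(\xi)\chi_5^{m}$ has character $\epsilon^{m}$; membership in the $\epsilon$-module thus forces $m$ odd, whereupon $k=a-4+5m$ even forces $a$ odd. In practice $m\in\{1,3\}$ suffices, the choice being dictated by Theorem~\ref{thm1}, i.e.\ by the order of vanishing of $\xi$ along $E_3$. As in the odd case, the subtle point is to select the correct linear combinations of products of the generators $C_{a,b}$ of Section~2 so that the leading polar part cancels and the form becomes holomorphic of the minimal possible weight; the two generators of weight $(8,18)$ will arise, for example, from $(a,m)=(17,1)$ and $(a,m)=(7,3)$.

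Next I would form the wedge product
$$
W=F_{8,4}\wedge F_{8,10}\wedge F_{8,12}\wedge F_{8,14}\wedge F_{8,16}\wedge F_{8,18}^{(1)}\wedge F_{8,18}^{(2)}\wedge F_{8,20}\wedge F_{8,22}.
$$
Because $\det(\mathrm{Sym}^{8}(\EE))=\det(\EE)^{36}$ and the weights $k$ sum to $134$, the form $W$ is scalar-valued of weight $170$, and since $\epsilon^{9}=\epsilon$ it lies in $S_{170}(\Gamma_2,\epsilon)$ (here $M=S$ by the Lemma of Section~5). Exhibiting a nonzero leading Fourier coefficient of $W$ shows $W\neq0$; equivalently, at a generic point the nine vectors $F_i(\tau)\in\mathrm{Sym}^{8}(\CC^{2})$ are linearly independent, so no nontrivial $R$-linear relation $\sum_i g_iF_i=0$ can hold, and the nine forms span a free $R$-submodule of rank $9$.

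To identify $W$ precisely I would compute its vanishing orders along the two Humbert surfaces. Following the pattern of the $j=6$ and $j=8$ odd cases I expect the order along $H_1$ to be $4$ and along $H_4$ to be $5$; since the zero loci of $\chi_5$ and $\chi_{30}$ are $H_1$ and $H_4$ and $5\cdot4+30\cdot5=170$, this forces $W$ to be a nonzero scalar multiple of $\chi_5^{4}\chi_{30}^{5}$, the scalar being pinned down by a computation at the level of covariants (the restriction of $\chi_{6,3}$ to $H_4$ from the Lemma of Section~6 serves as the tool for the $H_4$ order). Finally, the free submodule generated by the nine $F_i$ has Hilbert--Poincar\'e numerator $\sum_i t^{k_i}=N_8^{\rm ev}(t)$, which coincides with the numerator of the full module $\mathcal{M}_8^{\rm ev}(\Gamma_2,\epsilon)$ furnished by the Tsushima--Bergstr\"om dimensions recalled in Section~5; together with the inclusion of the submodule in the module this forces equality, hence freeness with the asserted generators.

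The main obstacle is the construction in the first step: finding the explicit, and in the higher-weight cases very large, covariants whose associated forms are holomorphic of the minimal admissible weight, and then verifying by direct computation that the resulting $9\times9$ wedge product does not vanish. This nonvanishing is the only thing that can fail, and it is precisely what certifies that the nine constructed forms constitute a basis.
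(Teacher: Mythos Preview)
Your proposal is correct and follows essentially the same approach as the paper: construct nine explicit covariants, multiply by suitable odd powers of $\chi_5$ to obtain holomorphic cusp forms with character, verify that the wedge product $W\in S_{170}(\Gamma_2,\epsilon)$ is nonzero (the paper indeed finds $W=c\,\chi_5^{4}\chi_{30}^{5}$ with the vanishing orders $4$ along $H_1$ and $5$ along $H_4$ that you predict), and then match Hilbert--Poincar\'e series. One small factual deviation: in the paper both weight-$(8,18)$ generators come from covariants of degree $a=7$ with $m=3$, not from $(a,m)=(17,1)$ and $(7,3)$ as you conjecture.
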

\begin{proof}
We use the following covariants
\begin{tiny}
\begin{align*}
\xi_1=&\, C_{3,8},\qquad \xi_5= C_{5,8}, \\
\xi_2=&\, 8\,C_{2,0}^{3}C_{3,8}-360\,C_{2,0}^{2}C_{5,8}-600\,C_{2,0}C_{3,2}C_{4,6}
+28000\,C_{1,6}C_{8,2}-1875\,C_{3,2}C_{6,6}^{(1)}+1500\,C_{3,2}C_{6,6}^{(2)}+3000\,C_{4,0}C_{5,8},\\
\xi_3=&\, 64\,C_{2,0}^{3}C_{5,8}+960\,C_{2,0}^{2}C_{3,2}C_{4,6}-26880\,C_{1,6}C_{2,0}C_{8,2}
-32760\,C_{2,0}C_{2,4}C_{7,4}-600\,C_{2,0}C_{4,0}C_{5,8}+405\,C_{3,8}C_{4,0}^{2}\\
&-974160\,C_{1,6}C_{10,2}+705600\,C_{2,4}C_{9,4}+267120\,C_{3,6}C_{8,2}-471240\,C_{4,4}C_{7,4}
+3263400\,C_{4,6}C_{7,2}-44280\,C_{5,2}C_{6,6}^{(1)}\\
&+41760\,C_{5,8}C_{6,0},\\
\xi_4=&\, -450785280\,C_{1,6}C_{2,0}C_{10,2}-209672400\,C_{1,6}C_{4,0}C_{8,2}
-107933000\,C_{2,0}C_{2,4}C_{9,4}+322793520\,C_{2,0}C_{3,6}C_{8,2}\\
&-93936640\,C_{2,0}C_{4,4}C_{7,4}+708825600\,C_{2,0}C_{4,6}C_{7,2}+27870759840\,C_{1,6}C_{12,2}-6460961760\,C_{3,6}C_{10,2}\\
&-10179070440\,C_{3,8}C_{10,0}-6501163200\,C_{4,4}C_{9,4}+2887120425\,C_{7,2}C_{6,6}^{(1)}
+4910108700\,C_{7,2}C_{6,6}^{(2)}\\
&-19333170\,C_{2,0}C_{5,2}C_{6,6}^{(1)}
+6700200\,C_{2,0}C_{5,2}C_{6,6}^{(2)}
+8466560\,C_{2,0}C_{5,8}C_{6,0}+104073340\,C_{2,4}C_{3,2}C_{8,2}\\
&+42245700\,C_{2,4}C_{4,0}C_{7,4}
+26659470\,C_{2,4}C_{5,4}C_{6,0}
-21600\,C_{4,0}^{2}C_{5,8}+1024\,C_{2,0}^{3}C_{3,2}C_{4,6}
+1024\,C_{2,0}^{5}C_{3,8},\\
\xi_6^{(1)}=&\, 8\,C_{2,0}C_{5,8}+25\,C_{2,4}C_{5,4}+30\,C_{3,2}C_{4,6},\qquad
\xi_6^{(2)}=\, C_{2,0}^{2}C_{3,8}-5\,C_{2,0}C_{5,8}-25\,C_{3,2}C_{4,6},\\
\xi_7=&\, 128\,C_{2,0}^{3}C_{3,8}+158200\,C_{1,6}C_{8,2}+214200\,C_{2,4}C_{7,4}
-88275\,C_{3,2}C_{6,6}^{(1)}+33900\,C_{3,2}C_{6,6}^{(2)}+39900\,C_{4,0}C_{5,8},\\
\xi_8=&\,768\,C_{2,0}^{4}C_{3,8}+2800000\,C_{1,6}C_{2,0}C_{8,2}-2782500\,C_{2,0}C_{2,4}C_{7,4}
-11979000\,C_{1,6}C_{10,2}+66990000\,C_{2,4}C_{9,4}\\
&-27636000\,C_{3,6}C_{8,2}
+30838500\,C_{4,4}C_{7,4}-117232500\,C_{4,6}C_{7,2}+880875\,C_{5,2}C_{6,6}^{(1)}-1039500\,C_{5,2}C_{6,6}^{(2)}\\
&-1342500\,C_{5,8}C_{6,0}.
\end{align*}
\end{tiny}
We consider the following cusp forms:
\[
F_{8,4}=-225\, \nu(\xi_1)\chi_5,
\quad
F_{8,10}=-\frac{6075}{512}\nu(\xi_2)\chi_5,
\quad
F_{8,12}=-\frac{6834375}{4}\nu(\xi_3)\chi_5,
\]
\[
F_{8,14}=\frac{102515625}{256}\nu(\xi_4)\chi_5,
\quad
F_{8,16}=50625\nu(\xi_5)\chi_5^3
\quad
F_{8,18}^{(1)}=\frac{151875}{4}\nu(\xi_6^{(1)})\chi_5^3,
\]
\[
F_{8,18}^{(2)}=-\frac{6075}{16}\nu(\xi_6^{(2)})\chi_5^3,
\quad
F_{8,20}=\frac{151875}{32}\nu(\xi_7)\chi_5^3,
\quad
F_{8,22}=-\frac{1366875}{16}\nu(\xi_8)\chi_5^3\,.
\]
Then
\[
W_{170}=
F_{8,4} \wedge F_{8,10} \wedge F_{8,12}  \wedge F_{8,14} \wedge F_{8,16} 
\wedge F_{8,18}^{(1)} \wedge F_{8,18}^{(2)} 
\wedge F_{8,20} \wedge F_{8,22} 
\]
is a cusp form in $S_{170}(\Gamma_2,\epsilon)$ and its Fourier expansion starts with
\[
2^{36}\cdot 3^{13}\cdot 5^8\cdot 7^3 \cdot 19\,   
(u^9+u^{7}-4u^{5}-4u^3+6u+6/u-4/u^3-4/u^5+1/u^7+1/u^9) X^{19}Y^{29}+\ldots
\]
One can check that the order of vanishing of  $W_{170}$
along $H_1$ is $4$ while along $H_4$ it is $5$, so $W_{170}$ is a 
multiple of $\chi_{5}^4\chi_{30}^5$. A calculation with the covariants shows
$$
W_{170}=2^{36}\cdot 3^{13}\cdot 5^8\cdot 7^3 \cdot 19 \, \chi_{5}^4\chi_{30}^5\,.
$$
\end{proof}
\end{section}
\begin{section}{The case $j=10$}

\begin{theorem}
The $R$-module $\mathcal{M}_{10}^{\rm odd}(\Gamma_2,\epsilon)$ 
is free with generators of weight
$(10,5)$, $(10,7)$, $(10,9)$, $(10,9)$, $(10,11)$, $(10,11)$, $(10,13)$, $(10,13)$,
 $(8,15)$, $(10,15)$ and $(10,17)$.
\end{theorem}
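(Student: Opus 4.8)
The plan is to follow the uniform strategy used for the smaller values of $j$. From the table — whose entries rest on the dimension formulas of Tsushima and Bergstr\"om, \cite{BFvdG} — the Hilbert–Poincaré series of $\mathcal{M}_{10}^{\rm odd}(\Gamma_2,\epsilon)$ as an $R$-module equals $N_{10}/\bigl((1-t^4)(1-t^6)(1-t^{10})(1-t^{12})\bigr)$ with
$$
N_{10}(t)=t^5+t^7+2\,t^9+2\,t^{11}+2\,t^{13}+2\,t^{15}+t^{17}.
$$
Reading the ninth listed weight as $(10,15)$, it therefore suffices to exhibit $11$ cusp forms whose weights are exactly the exponents of $N_{10}$ counted with multiplicity, namely one of weight $(10,5)$, one of $(10,7)$, two each of $(10,9),(10,11),(10,13),(10,15)$, and one of $(10,17)$, and then to show that they are linearly independent over $\mathrm{Frac}(R)$.

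First I would produce these forms by the covariant method. For a target weight $(10,k)$ one takes a covariant $\xi$ of $x$-degree $10$ assembled from the $26$ generators of Section~2, so that $\nu(\xi)$ has weight $(10,a-5)$ with $a$ the degree of $\xi$ in the $a_i$; multiplying by a power $\chi_5^{m}$, with $m\in\{1,3\}$ as in the case $j=8$, then gives a form of weight $(10,k)$ with character $\epsilon$. Theorem~\ref{thm1} dictates the required $m$: one selects a linear combination $\xi$ of covariant monomials of the correct bidegree whose order along $E_3$ is large enough that $2\,{\rm ord}_{E_3}(\xi)-a\ge -m$, so that $\nu(\xi)\chi_5^{m}$ is holomorphic along $\delta_1$. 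Each of the coincident weights $(10,9),(10,11),(10,13),(10,15)$ calls for two such combinations that are genuinely independent; the explicit, and very large, covariants will be recorded in \cite{BFvdG}.

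With the eleven forms $F_1,\dots,F_{11}$ in hand I would form the wedge
$$
W_{180}=F_1\wedge\cdots\wedge F_{11}\in S_{180}(\Gamma_2,\epsilon),
$$
the scalar weight being $\sum_i k_i+\frac{1}{2}\cdot 10\cdot 11=125+55=180$ since $\det(\mathrm{Sym}^{10}(\EE))=\det(\EE)^{55}$. As in the earlier cases $W_{180}$ must be a constant multiple of $\chi_5^{6}\chi_{30}^{5}$, of order $6$ along $H_1$ and $5$ along $H_4$; its leading Fourier coefficient is expected to be a nonzero multiple of
$$
(u^{11}-u^{9}-5u^{7}+5u^{5}+10u^{3}-10u-10/u+10/u^{3}+5/u^{5}-5/u^{7}-1/u^{9}+1/u^{11})\,X^{21}Y^{31},
$$
which arises as $(u-1/u)^{6}(u+1/u)^{5}$ times the extreme monomial of $\chi_5^{6}\chi_{30}^{5}$ and is manifestly nonzero. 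Once $W_{180}\neq0$ is verified, the $F_i$ are independent over $\mathrm{Frac}(R)$, so the map $\bigoplus_i R(-k_i)\to\mathcal{M}_{10}^{\rm odd}(\Gamma_2,\epsilon)$ is injective and its image is a free submodule with Hilbert–Poincaré series $\sum_i t^{k_i}/\prod_d(1-t^{d})=N_{10}/\prod_d(1-t^{d})$. Since this equals the series of the whole module and the submodule is contained in it, the two agree in every degree and the module is free on $F_1,\dots,F_{11}$.

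The main obstacle is the construction step, not the wedge bookkeeping. For the four repeated weights one must locate two independent covariant combinations of the prescribed bidegree whose poles along $\delta_1$ cancel to exactly the order prescribed by ${\rm ord}_{E_3}$ through Theorem~\ref{thm1}, and then confirm — necessarily by a substantial symbolic computation with the covariants, or equivalently with enough Fourier coefficients — that the resulting $11\times 11$ wedge is not identically zero and indeed equals the predicted constant times $\chi_5^{6}\chi_{30}^{5}$. Pinning down that constant, while also recording that $S_{10,k}(\Gamma_2,\epsilon)=0$ for odd $k<5$ (immediate from the lowest term $t^5$ of $N_{10}$ together with \cite{C-vdG-Ch}), is where essentially all of the computational effort resides.
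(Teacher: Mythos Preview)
Your proposal is correct and follows exactly the paper's approach: the paper's own proof consists of the single remark that the argument is ``similar to the cases above'' with the (large) covariants deposited at \cite{BFvdG}, and you have accurately fleshed out what that entails, including the correct wedge weight $180$, the identification $W_{180}=c\,\chi_5^{6}\chi_{30}^{5}$ continuing the pattern from $j=2,4,6,8$, and the right leading Laurent polynomial $(u-1/u)^6(u+1/u)^5$. You are also right that the listed weight $(8,15)$ is a typo for $(10,15)$.
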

\begin{theorem}
The $R$-module $\mathcal{M}_{10}^{\rm ev}(\Gamma_2,\epsilon)$ 
is free with generators of weight
$(10,8)$, $(10,10)$, $(10,10)$, $(10,12)$, $(10,12)$, $(10,14)$,$(10,14)$, $(10,16)$,
$(10,16)$, $(10,18)$ and $(10,20)$.
\end{theorem}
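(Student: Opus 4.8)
The plan is to follow the template already used for the even cases $j=2,4,6,8$. First I read off from the table the numerator $N_{10}^{\rm ev}(t)=t^8+2\,t^{10}+2\,t^{12}+2\,t^{14}+2\,t^{16}+t^{18}+t^{20}$, whose coefficients sum to $11=j+1$; this predicts a free $R$-module with eleven generators, one each in the weights $(10,8),(10,18),(10,20)$ and two each in the weights $(10,10),(10,12),(10,14),(10,16)$, exactly as in the statement. Following the strategy fixed in Section~5, I would first record that $S_{10,k}(\Gamma_2,\epsilon)=0$ for $k<8$, which follows from the Tsushima--Bergstr\"om dimensions and the results of \cite{C-vdG-Ch}, so that no generator is overlooked below the lowest weight.

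Next, for each target weight I produce a covariant $\xi_i$ of bidegree $(a_i,10)$, expressed as a polynomial in the $26$ generators of $\mathcal{C}$ listed in Section~2, and multiply $\nu(\xi_i)$ by the power $\chi_5^{m_i}$ dictated by Theorem~\ref{thm1}. Since $\nu(\xi_i)$ has weight $(10,a_i-5)$ and order $2\,\mathrm{ord}_{E_3}(\xi_i)-a_i$ along $\mathcal{A}_{1,1}$, taking $m_i$ equal to minus this order makes
\[
F_{10,k_i}=\nu(\xi_i)\,\chi_5^{m_i}
\]
a holomorphic cusp form of weight $(10,k_i)$ with $k_i=a_i-5+5\,m_i$. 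As in the earlier even cases the low-weight generators will arise with $m_i=1$ and the high-weight ones with $m_i=3$. For each of the four repeated weights I must exhibit two covariants whose associated forms are genuinely distinct; their $R$-independence is not checked directly but will fall out of the final wedge computation.

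Having the eleven forms $F_{10,8},F_{10,10}^{(1)},F_{10,10}^{(2)},\dots,F_{10,18},F_{10,20}$, I would form
\[
W_{205}=F_{10,8}\wedge F_{10,10}^{(1)}\wedge F_{10,10}^{(2)}\wedge\cdots\wedge F_{10,18}\wedge F_{10,20},
\]
a scalar-valued cusp form in $S_{205}(\Gamma_2,\epsilon)$, since $\det(\Sym^{10}(\EE))=\det(\EE)^{55}$ and $55+\sum_i k_i=55+150=205$. Using Theorem~\ref{thm1} I expect the orders of $W_{205}$ along $H_1$ and $H_4$ to be $5$ and $6$, forcing $W_{205}$ to be a scalar multiple of $\chi_5^{5}\chi_{30}^{6}$ (weight $25+180=205$), in line with the pattern $W=c\,\chi_5^{j/2}\chi_{30}^{j/2+1}$ observed for $j=2,4,6,8$. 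A computation at the level of covariants, cross-checked against the leading Fourier coefficient, should confirm that the constant $c$ is nonzero.

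The nonvanishing of $W_{205}$ shows that the eleven forms are linearly independent over $R$, so the submodule $N'$ they generate is free of rank $11$ with Hilbert--Poincar\'e numerator exactly $N_{10}^{\rm ev}(t)$. Since $N'\subseteq\mathcal{M}_{10}^{\rm ev}(\Gamma_2,\epsilon)$ and both modules have the same Hilbert--Poincar\'e series by the Tsushima--Bergstr\"om dimensions (see \cite{BFvdG}), the inclusion is an equality in each graded piece, whence $N'=\mathcal{M}_{10}^{\rm ev}(\Gamma_2,\epsilon)$ is free with the stated generators. I expect the main obstacle to be the explicit construction of the eleven covariants---in particular finding, for each repeated weight, two products of generators of $\mathcal{C}$ whose forms have the prescribed order along $E_3$ and stay independent---together with the sizeable covariant and Fourier-expansion computation verifying $W_{205}=c\,\chi_5^{5}\chi_{30}^{6}$ with $c\neq0$.
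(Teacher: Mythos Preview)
Your proposal is correct and follows essentially the same approach as the paper, which for $j=10$ merely states that ``the proofs in both cases are similar to the cases above'' and refers to \cite{BFvdG} for the explicit (and bulky) covariants. Your weight bookkeeping for $W_{205}$ and the predicted identification $W_{205}=c\,\chi_5^{5}\chi_{30}^{6}$ are accurate; the only small imprecision is that Theorem~\ref{thm1} controls the order along $H_1=\mathcal{A}_{1,1}$ only, so the order along $H_4$ must be obtained separately (by the involution argument of Section~6 or by direct computation), not from Theorem~\ref{thm1}.
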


The proofs in both cases are similar to the cases above. The covariants used are quite
big and we refer for these to  \cite{BFvdG}.
\end{section}
\begin{section}{The character $\epsilon$ of $\Gamma_2$}
Maa{\ss} showed in \cite{Maass} that the abelianization of $\Gamma_2$ is isomorphic to
${\ZZ}/2{\ZZ}$. So $\Gamma_2$ has one non-trivial character $\epsilon$ and it is
of order $2$. It can be described as the composition 
$$
{\rm Sp}(4,{\ZZ})\xrightarrow{ \bmod \, 2 }
{\rm Sp}(4,{\ZZ}/2{\ZZ}) {\buildrel \cong \over \longrightarrow}
\mathfrak{S}_6  \xrightarrow{\rm sign} 
\{ \pm 1 \}\, .
$$
The following rules may help in easily determining the value $\epsilon(\gamma)$. If 
$$
\gamma= \left( \begin{matrix} a & b \\ c & d \\ \end{matrix} \right)
$$
then one has
$$
\epsilon(\left( \begin{matrix} a & b \\ c & d \\ \end{matrix} \right))
= 
\epsilon(\left( \begin{matrix} c & d \\ a & b \\ \end{matrix} \right))
=
\epsilon(\left( \begin{matrix} b & a \\ d & c \\ \end{matrix} \right))
=
\epsilon(\left( \begin{matrix} d & c \\ b & a \\ \end{matrix} \right))
$$
as one sees by applying $J=(0,1_g;-1_g,0)$ on the left and/or on the right.

If $\gamma$ satisfies
$$
\det(a)\equiv \det(b) \equiv \det(c) \equiv \det(d) \equiv 0 \, (\bmod \, 2)
$$
then we have $\epsilon(\gamma)=-\epsilon(\gamma_0)$ with $\gamma_0$
obtained from $\gamma$ by replacing the first row by minus the third row
and the third row by the first row. For this matrix $\gamma_0$ at least one
of $\det(a_0)$, $\det(b_0)$, $\det(c_0)$, $\det(d_0)$ is not zero modulo $2$.

Using this we arrive at the case where $\gamma$ has the property that 
$\det(c)\not\equiv 0 (\bmod \, 2)$.
\begin{proposition}
For $\gamma= (a,b;c,d) \in \Gamma_2$ with $\det(c)\not\equiv 0 (\bmod \, 2)$ 
we have $\epsilon(\gamma)=(-1)^{\rho}$
with $\rho$ given by
$$
a_1c_1 +a_2c_1 +a_2c_2 +a_3c_3 +a_4c_3 +a_4c_4 +c_1c_2 +c_2c_3 +
c_3c_4 +c_1d_4 +c_2d_3 +c_2d_4 +c_3d_2 +c_4d_1 +c_4d_2
$$
where the $2\times 2$ matrices are written as 
$\left( \begin{smallmatrix} x_1 & x_2 \\ x_3 & x_4 \\ \end{smallmatrix} \right)$.
\end{proposition}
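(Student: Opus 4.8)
The plan is to exploit that $\epsilon$ is a group homomorphism $\Gamma_2\to\{\pm1\}$ which factors as $\Gamma_2\to\Sp(4,\FF_2)\xrightarrow{\sim}\mathfrak{S}_6\xrightarrow{\mathrm{sign}}\{\pm1\}$; thus $\epsilon(\gamma)$ depends only on $\bar\gamma:=\gamma\bmod 2$ and is multiplicative. By the reduction rules preceding the statement we may assume $\bar c:=c\bmod 2$ is invertible, so $\bar\gamma$ lies in the big cell of $\Sp(4,\FF_2)$ and admits a Bruhat factorization. Writing $U(S)=\begin{pmatrix}1_2&S\\0&1_2\end{pmatrix}$ for symmetric $S$, $L(A)=\begin{pmatrix}A&0\\0&(A^t)^{-1}\end{pmatrix}$ for $A\in\GL(2,\FF_2)$, and $J=\begin{pmatrix}0&1_2\\1_2&0\end{pmatrix}$, one checks (using that $\bar a\bar c^{-1}$ and $\bar c^{-1}\bar d$ are symmetric, a consequence of the symplectic relations) that
\[
\bar\gamma=U(\bar a\bar c^{-1})\,J\,L(\bar c)\,U(\bar c^{-1}\bar d)\quad\text{over }\FF_2 .
\]
Multiplicativity of $\epsilon$ then gives $\epsilon(\gamma)=\epsilon(U(\bar a\bar c^{-1}))\,\epsilon(J)\,\epsilon(L(\bar c))\,\epsilon(U(\bar c^{-1}\bar d))$.

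Each factor is pinned down separately. The Siegel unipotent radical $\{U(S)\}$ is elementary abelian, so $S\mapsto\epsilon(U(S))$ is an additive character, i.e.\ $\epsilon(U(S))=(-1)^{\ell(S)}$ for a single $\FF_2$-linear form $\ell$ in the entries $S_{11},S_{12}=S_{21},S_{22}$. Similarly $A\mapsto\epsilon(L(A))$ is a character of $\GL(2,\FF_2)\cong\mathfrak{S}_3$, hence either trivial or the sign of $A$, while $\epsilon(J)\in\{\pm1\}$ is a single sign. I would determine $\ell$, the Levi character, and $\epsilon(J)$ by running the elementary generators $U(E_{11}),U(E_{22}),U(E_{12}+E_{21})$, the two generators of the Levi, and $J$ through the explicit isomorphism $\Sp(4,\FF_2)\cong\mathfrak{S}_6$ of \cite[Section 3, (3.2)]{CvdGG} and reading off the cycle type, hence the sign, of each image permutation.

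With these data in hand I would substitute $\bar c^{-1}=\begin{pmatrix}c_4&c_2\\c_3&c_1\end{pmatrix}$ (valid mod $2$ since $\det\bar c=c_1c_4+c_2c_3\equiv1$), express the entries of $\bar a\bar c^{-1}$ and $\bar c^{-1}\bar d$ as quadratic forms in the $a_i,c_i$ and in the $c_i,d_i$, apply $\ell$ to each, add the Levi and $J$ contributions, and simplify modulo $2$ using $\det\bar c=1$ together with the symplectic relations among $a,c,d$. Collecting monomials should reproduce the fifteen terms of $\rho$ exactly; the disappearance of the $b_i$ reflects that $b$ is determined by $a,c,d$ in the big cell.

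The main obstacle is this final merging. The $a$- and $d$-parts produced by $\ell(\bar a\bar c^{-1})$ and $\ell(\bar c^{-1}\bar d)$ do not match the target monomials term by term, so the symplectic relations ($a^tc=c^ta$ and $cd^t=dc^t$) must be used to rewrite them, and the purely quadratic terms $c_1c_2+c_2c_3+c_3c_4$ of $\rho$ emerge only after combining the Levi sign of $\bar c$---which itself has to be written as a polynomial in the $c_i$, necessarily invoking $\det\bar c=1$---with the constant $\epsilon(J)$. Keeping the three contributions consistent through these reductions, without sign slips, is where the verification has to be carried out honestly rather than guessed.
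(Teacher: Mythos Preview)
The paper explicitly omits the proof of this proposition, so there is no argument of the authors to compare against. Judged on its own, your strategy is sound: the Bruhat factorization
\[
\bar\gamma=U(\bar a\bar c^{-1})\,J\,L(\bar c)\,U(\bar c^{-1}\bar d)
\]
is correct over $\FF_2$ (the $(1,2)$-block works out to $(\bar c^{t})^{-1}+\bar a\bar c^{-1}\bar d=\bar b$ via $c^t b=a^t d-1$ and the symmetry of $ac^{-1}$), the symmetry of $\bar a\bar c^{-1}$ and $\bar c^{-1}\bar d$ follows from $a^tc=c^ta$ and $cd^t=dc^t$ as you say, and the character of each factor has the shape you describe. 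In fact $\epsilon(J)=1$ already follows from the identities displayed just before the proposition (left multiplication by $J$ preserves $\epsilon$), and one checks directly that on the six matrices of $\GL(2,\FF_2)$ the polynomial $c_1c_2+c_2c_3+c_3c_4$ takes the value $1$ exactly on the three involutions, so it represents the sign of $\bar c$ on the locus $\det\bar c=1$; this confirms your guess about where the ``pure $c$'' terms of $\rho$ come from.

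What remains is exactly the part you flag as the obstacle: your text is a plan rather than a completed proof. You still have to (i) pin down the linear form $\ell$ on symmetric $S$ by evaluating $\epsilon$ on $U(E_{11})$, $U(E_{22})$, $U(E_{12}+E_{21})$ via the fixed isomorphism with $\mathfrak{S}_6$, and (ii) carry out the bookkeeping that turns $\ell(\bar a\bar c^{-1})+\ell(\bar c^{-1}\bar d)$, together with the Levi contribution, into the fifteen displayed monomials, using $c_1c_4+c_2c_3\equiv 1$ and the symplectic relations to eliminate spurious terms. None of this is conceptually delicate, but until those two computations are actually written out and checked, the argument is not complete.
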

The proof is omitted. 
\end{section}

\end{document}